\numberwithin{equation}{section}
\newtheorem{theorem}{Theorem}[section]
\newtheorem{proposition}[theorem]{Proposition}
\newtheorem{lemma}[theorem]{Lemma}
\newtheorem{corollary}[theorem]{Corollary}
\newtheorem{remark}[theorem]{Remark}
\newcommand{\rad}{{\text{\upshape rad}}}
\newcommand{\loc}{{\text{\upshape loc}}}
\def\L{{\Lambda}}
\def\l{{\lambda}}
\def\a{{\alpha}}
\newcommand{\R}{\mathbb{R}}
\definecolor{darkgreen}{rgb}{0.0, 0.5, 0.2} 
\definecolor{purple}{rgb}{0.5, 0.0, 0.5}
\newcommand{\AL}{\color{purple}}
\newcommand{\F}{\color{magenta}}
\newcommand{\taglia}{\color{cyan}}
\newcommand{\n}{\mathop{N}}
\newcommand{\remove}[1]{}
\def\sideremark#1{\ifvmode\leavevmode\fi\vadjust{\vbox to0pt{\vss% the remark
 \hbox to 0pt{\hskip\hsize\hskip1em%                          will appear only
 \vbox{\hsize2.1cm\tiny\raggedright\pretolerance10000%          on the side
  \noindent #1\hfill}\hss}\vbox to15pt{\vfil}\vss}}}%
\newcommand{\edz}[1]{\sideremark{#1}}
\newtheorem*{theorem*}{Theorem}
\begin{document}

\title[Singular eigenvalue problem] {On a singular eigenvalue problem and its applications in computing the Morse index of solutions to semilinear PDE's, Part II}
\author[A.~L.~Amadori, F.~Gladiali]{Anna Lisa Amadori$^\dag$,  Francesca Gladiali$^\ddag$}
\thanks{This work was supported by Gruppo Nazionale per l'Analisi Matematica, la Probabilit\`a e le loro Applicazioni (GNAMPA) of the Istituto Nazionale di Alta Matematica (INdAM). {The second author is supported by Prin-2015KB9WPT}}
\date{\today}
\address{$\dag$ Dipartimento di Scienze e Tecnologie, Universit\`a di Napoli ``Parthenope", Centro Direzionale di Napoli, Isola C4, 80143 Napoli, Italy. \texttt{annalisa.amadori@uniparthenope.it}}
\address{$\ddag$ Dipartimento di Chimica e Farmacia, Universit\`a di Sassari, via Piandanna 4, 07100 Sassari, Italy. \texttt{fgladiali@uniss.it}}

\begin{abstract}
By  using a characterization of the Morse index and the degeneracy in terms of a singular one dimensional eigenvalue problem given in \cite{AG-part1}, we give a lower bound for the Morse index of radial solutions to H\'enon type problems 
	\[
	\left\{\begin{array}{ll}
	-\Delta u = |x|^{\a}f(u) \qquad & \text{ in } \Omega, \\
	u= 0 & \text{ on } \partial \Omega,
	\end{array} \right.
	\]
	where $\Omega$ is a bounded radially symmetric domain of $\R^N$ ($N\ge 2$), $\a>0$ 
	and $f$ is a real function.   From this estimate we get that the Morse index of nodal radial solutions to this problem goes to $\infty$ as $\a\to \infty$.
	Concerning the real H\'enon problem, $f(u)= |u|^{p-1}u$, we prove radial nondegeneracy, we show that the radial Morse index is equal to the number of nodal zones and we get that a least energy nodal solution is not radial.
\end{abstract}

\maketitle

{\bf Keywords:} semilinear elliptic equations, nodal solutions, Morse index,  radial solutions, H\'enon type problems.

{\bf AMS Subject Classifications:} 35J91, 35B05, 34B16.

\section{Introduction}
In this paper we estimate the Morse index of radial solutions to 
\begin{equation} \label{general-f-H}
\left\{\begin{array}{ll}
-\Delta u = |x|^{\alpha}f(u) \qquad & \text{ in } \Omega, \\
u= 0 & \text{ on } \partial \Omega,
\end{array} \right.
\end{equation}
where $\Omega$ is a bounded radially symmetric domain of $\R^N$, with $N\ge 2$, $\a\geq 0$ is a real parameter
and 
$f $ is a real function. We will consider weak and classical solutions. When $\a=0$ problem \eqref{general-f-H}
becomes autonomous 
\begin{equation} \label{general-f-auto}
\left\{\begin{array}{ll}
-\Delta u = f(u) \qquad & \text{ in } \Omega, \\
u= 0 & \text{ on } \partial \Omega,
\end{array} \right.
\end{equation} 
and we recover, from a different point of view, an already known estimate on the Morse index of radial solutions to \eqref{general-f-auto}, see \cite{AP}, \cite{BDG} and \cite{DIPa}.

Since this paper is based on the Morse index of a solution we  recall its definition and its relevance in the study of P.D.Es.  Taken a weak solution $u\in H^1_0(\Omega)$ to \eqref{general-f-H} we introduce the associated linearized operator  
	\begin{align}
	\label{linearized-intro}
	L_u(\psi)&:=-\Delta \psi-|x|^\a f'(u)\psi
\intertext{ and the associated quadratic form }
\label{forma-quadratica}
{\mathcal Q}_u(\psi)& :=\int_\Omega \left(|\nabla \psi|^2 -|x|^\a f'(u)\,\psi^2\right) dx
\end{align}
In order to give sense to $L_u$ and $\mathcal Q_u$ 
 we will consider weak solutions $u\in H^1_0(\Omega)$ to \eqref{general-f-H} under the hypotheses
\begin{enumerate}[\bf{H.}1] 
	\item 
	$f \in W^{1,1}_{\loc}(\R)$,
	\item $f'(u)\in L^{\infty}(\Omega)$.
\end{enumerate}
	Assumptions {{H.}1} and {{H.}2} are needed to give a sense to $f'(s)$ and to the weak formulation to  \eqref{general-f-H} and  \eqref{linearized-intro}
	 and $Q_u(\psi)$ and to recover compactness of the linear operator $L_u$, so to use the eigenvalue theory for compact operators. 
It is easily seen that if $f\in C^1(\R)$ and $u$ is a classical solution  then both assumptions hold.
Besides assumption H.2 is satisfied by every radial weak solution if $f$ satisfies some stricter condition, like for instance 
\begin{enumerate}[\bf{H.1'}] 
	\item $f\in W^{1,\infty}_{\loc}(\R)$ and $|f(s)|\le C\left( 1 + |s|^p\right)$ when $s$ is large, for some constant $C$ and $p\in\left(1, \frac{2N-2+\a}{2+\a}\right)$, or $p>1$ if $N=2$.
\end{enumerate}
See Remark \ref{remark:H'}.
	The hypothesis H.1' has been introduced by Ni \cite{Ni}, together with some other ones, to prove existence of  radial solutions to \eqref{general-f-H} and in particular  to the real H\'enon problem. \\
In some results we will also assume that $f$ satisfies
\begin{enumerate}[\bf {H.}3]
	\item $f'(s)>f(s)/s $, $s\neq 0$.
\end{enumerate}

\

 Given a weak solution $u$ the Morse index of $u$, that we denote by $m(u)$, is the maximal dimension of a subspace of $H^1_0(\Omega)$ in which the quadratic form $Q_u$ is negative defined, or equivalently, since $L_u$ is a linear compact operator, is the number of the negative eigenvalues of $L_u$ in $H^1_0(\Omega)$, counted with multiplicity and when $u$ is a radial solution the radial Morse index of $u$, called $m_\rad(u)$ is the number of the negative eigenvalues of $L_u$ in $H^1_{0,\rad}(\Omega)$ (the subspace of $H^1_0(\Omega)$ given by radial functions). 

The knowledge of the Morse index of a solution $u$ has important applications. Let us recall that a change in the Morse index, gives existence of other solutions that can be obtained by bifurcation and can give rise to the so called symmetry breaking phenomenon, that in the contest of the H\'enon problem has been highlighted by 
\cite{SSW} for  a least energy solution.
In the variational setting, indeed, there is a direct link between the second derivative of the 
	functional associated to \eqref{general-f-H} and the quadratic form $Q_u$ related to its linearization, and a change in the Morse index immediately produces a change in the critical groups, giving existence of bifurcating solutions; we refer to \cite{BSW} for the definition of critical groups, and their relation with the Morse index.
But also when the problem does not have a variational structure, as for instance when $f$ is supercritical, a change in the Morse index implies a bifurcation result, via the Leray Schauder degree, see \cite{AM}.  An application of this type can be found in  \cite{AG-henon}, dealing with positive solutions of the H\'enon problem in the ball. 
\\
 The knowledge of the Morse index also allows to produce nonradial solutions by minimization, as done in \cite{GI}, dealing with the Lane-Emden problem in the disk {and in \cite{A}, \cite{AG18-2} in the case of the H\'enon problem. %{\taglia , see also \cite{Gladiali-19}}.}

\

The study of the Morse index of nodal radial solutions has been tackled for the first time by Aftalion and Pacella, in \cite{AP}, dealing with autonomous problem of the type \eqref{general-f-auto} with $f\in C^1$. They proved that the linearized operator $L_u$ has at least $N$ negative eigenvalues whose corresponding eigenfunctions are non radial and odd with respect to $x_i$. Adding the first eigenvalue, which is associated to a radial, positive eigenfunction, one gets $m(u)\geq N+1$. 
Next denoting by $m$ the number of the nodal zones,  namely the connected components of  $\{x\in \Omega\ : \ u(x)\neq 0\}$, the paper \cite{BDG}
proved a similar estimate, precisely that $m(u)\geq (m-1)(N+1)$. In this case $f$ is  
absolutely continuous, but a restriction on its growth is imposed so that \eqref{general-f-auto} has a variational structure. 
 Next \cite{DIPa} established the following lower bound
	\begin{theorem*}[2.1 in \cite{DIPa}]
			Let $f\in C^1(\R)$, and $u$ 
		be a classical radial solution to \eqref{general-f-auto} with $m$ nodal zones. 
		Then 
		\begin{align*}
	%	\label{general-radial-morse-f(u)} 
		& m_{\rad}(u)  \ge m -1 , \;
	%	\label{general-morse-estimate-f(u)}
		& m(u)  \ge  %m_{\rad}(u)+(m-1) N \ge 
		(m-1)(1+N).
		\intertext{   If in addition $f$ fulfills H.3 , then  }
	%	\label{radial-morse-f(u)} 
		& m_{\rad}(u) \ge  m , \;
	%	\label{morse-estimate-f(u)}
		& m(u)  \ge  %m_{\rad}(u)+(m-1) N \ge 
		m+(m-1)N.
		\end{align*}
	\end{theorem*}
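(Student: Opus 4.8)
The plan is to reduce the linearized operator $L_u$ to a one-parameter family of singular ordinary differential operators by decomposing along spherical harmonics, and then to read off the negative eigenvalues from the one-dimensional problem via the characterization of \cite{AG-part1}. Writing $\psi=\phi(r)Y_k(\theta)$, where $Y_k$ ranges over the spherical harmonics of degree $k$ (eigenvalue $\l_k=k(k+N-2)$ of $-\D_{S^{N-1}}$, with multiplicity $N_k$, $N_0=1$, $N_1=N$), the quadratic form $Q_u$ splits as a direct sum over $k$ of the one-dimensional forms associated with
\[
A_k\phi:=-\phi''-\frac{N-1}{r}\phi'+\frac{\l_k}{r^2}\phi-r^\a f'(u)\phi
\]
on $(0,R)$ with the Dirichlet condition at $r=R$ and the natural regularity at $r=0$. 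Denoting by $\nu_k$ the number of negative eigenvalues of $A_k$, the characterization of \cite{AG-part1} yields $m_{\rad}(u)=\nu_0$ and $m(u)=\sum_{k\ge 0}N_k\,\nu_k$, so the whole estimate reduces to counting negative eigenvalues of the singular operators $A_k$ and, crucially, to comparing the sectors $k=0$ and $k=1$.

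For the non-radial part I would exploit that, in the autonomous case $\a=0$, the partial derivatives $\de_{x_i}u=u'(r)\,x_i/r$ solve $L_u(\de_{x_i}u)=0$; equivalently $v:=u'$ is a zero-energy solution of $A_1v=0$. Since $u$ has $m$ nodal zones, $v$ vanishes exactly at the $m-1$ interior critical radii of $u$ while $v(R)\neq 0$; by Sturm oscillation theory for the singular problem this pins down the location of $0$ in the spectrum of $A_1$, giving $\mu_{m-1}(1)<0<\mu_m(1)$, that is $\nu_1=m-1$. Multiplying by $N_1=N$ already produces $(m-1)N$ negative directions, carried by the $N$ functions $\de_{x_i}u$.

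For the radial part I would first obtain the general bound by monotonicity: since $\l_0=0<\l_1=N-1$, one has $A_0\le A_1$ as quadratic forms on the same weighted space, whence $\mu_j(0)\le\mu_j(1)$ for every $j$, and in particular $\mu_{m-1}(0)\le\mu_{m-1}(1)<0$, so $\nu_0\ge m-1$. Together with the previous step this gives $m_{\rad}(u)\ge m-1$ and $m(u)\ge (m-1)+(m-1)N=(m-1)(1+N)$. Under the extra hypothesis H.3 I would instead test directly with $u$: let $I_1,\dots,I_m$ be the $m$ nodal intervals of $u$ and set $u_i:=u\,\mathbf{1}_{I_i}$. These have pairwise disjoint supports, and integrating by parts on each $I_i$ (the boundary terms vanish because $u=0$ at the endpoints of $I_i$) together with the equation gives
\[
Q_u(u_i)=\int_{I_i}\big(f(u)u-f'(u)u^2\big)r^{N-1}\,dr=-\int_{I_i}u^2\big(f'(u)-f(u)/u\big)r^{N-1}\,dr<0,
\]
where H.3 is exactly what makes the integrand negative. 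Hence $Q_u$ is negative definite on the $m$-dimensional radial space spanned by $u_1,\dots,u_m$, so $\nu_0=m_{\rad}(u)\ge m$ and, adding the sector $k=1$ contribution, $m(u)\ge m+(m-1)N$.

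The hardest point is the singular nature of the one-dimensional problems: the weight $1/r^2$ makes $A_k$ a singular Sturm--Liouville operator, so one must work in the weighted framework supplied by \cite{AG-part1} to guarantee existence and ordering of the eigenvalues $\mu_j(k)$, the regularity of the eigenfunctions at the origin, and the validity of the oscillation and comparison principles used above. In particular I must check that $v=u'$ belongs to this space and that its interior zeros are exactly the $m-1$ critical radii with no degenerate vanishing, and that the spherical-harmonic decomposition accounts for the full Morse index with the correct multiplicities $N_k$, so that no negative direction is lost or double counted.
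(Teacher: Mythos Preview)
Your strategy is sound and arrives at the right bounds, and the key ingredients---the auxiliary function $v=u'$ satisfying $A_1v=0$, Sturm comparison, and (under H.3) the nodal pieces $u_i$ as radial test functions---are exactly those used in the paper. One overstatement: without an extra hypothesis such as $f(s)/s>0$ you cannot assert that $v=u'$ has \emph{exactly} $m-1$ interior zeros, only \emph{at least} $m-1$ (one per nodal zone after the first; cf.\ Lemma~\ref{lemma-max-assoluti}, which needs $f(s)/s>0$ to rule out multiple critical points). Consequently the conclusion $\nu_1=m-1$, i.e.\ $\mu_m(1)>0$, is not justified in general; but only $\nu_1\ge m-1$ is needed for the stated lower bound, and that follows from the Sturm argument you sketch.

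The paper's organization is a bit different. Rather than working sector by sector with the operators $A_k$ and invoking the monotonicity $A_0\le A_1$, it passes through the \emph{singular} radial eigenvalues $\widehat\Lambda^{\rad}_i$ (Proposition~\ref{general-morse-formula}): cutting $\zeta=u'$ between its zeros gives test functions on which the singular quadratic form equals $-(N-1)\int r^{N-3}\zeta_k^2$ (this is \eqref{B} with $\alpha=0$), whence the variational characterization yields $\widehat\Lambda^{\rad}_i\le -(N-1)$ for $i\le m-1$; a Sturm comparison with the singular eigenfunctions then upgrades this to the strict inequality $\widehat\Lambda^{\rad}_i<-(N-1)$ (Proposition~\ref{stima-nu-H}, \eqref{nl<k-general}). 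Feeding this into the Morse index formula \eqref{tag-2} gives both the radial and the full bound at once, since $\widehat\Lambda^{\rad}_i<-\lambda_1$ forces $J_i>1$ and hence each such $i$ contributes $N_0+N_1=1+N$. Your monotonicity step $\mu_j(0)\le\mu_j(1)$ is the min--max shadow of this same inequality. The advantage of the paper's packaging via $\widehat\Lambda^{\rad}_i$ (or $\widehat\nu_i$) is that it extends verbatim to $\alpha>0$ through the transformation \eqref{transformation-henon}, whereas the identity $A_1(u')=0$ is special to the autonomous case; your route is a bit more elementary when $\alpha=0$ since it sidesteps the singular spectral machinery.
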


All the mentioned estimates are achieved using the directional derivatives of the solution $u$, namely $\frac{\partial u}{\partial x_i}$, to obtain information on the eigenfunctions and eigenvalues of $L_u$, since $L_u\big(\frac{\partial u}{\partial x_i}\big)=0$ and cannot be adapted to deal with nonautonomous nonlinearities. 

Concerning the Morse index of nodal least energy solutions we quote \cite{BCW} and \cite{BW}, dealing with variational problems. 
Coming to nonautonomous problems of H\'enon type \eqref{general-f-H} we quote a recent paper by Dos Santos and Pacella \cite{dosSP} which proved
that any nodal radial solution in a radially symmetric planar domain satisfies $m(u)\ge 3$ for any $\alpha>0$ and $m(u)\ge 3+ \alpha$ when $\alpha$ is an even integer. 
Under the additional assumption H.3,  also the paper \cite{dosSP} furnishes an improved estimate claiming that $m(u)\ge m+2$ for any $\alpha>0$ and $m(u)\ge m+2+\alpha$ when $\alpha$ is an even integer. The proof relies on a suitable transformation which relates solutions to \eqref{general-f-H} to solutions of an autonomous problem of type \eqref{general-f-auto}, to which \cite[Theorem 2.1]{DIPa} can be applied.

Here we improve the results in \cite{dosSP} in two different directions: from one side we provide a higher lower bound in the planar case, from the other  we include the case of higher dimensions.
Letting $\left[\frac{\alpha}{2}\right] = \max \left\{ n\in {\mathbb Z} \, : \, n\le \frac{\alpha}{2}\right\}$ stand for the integer part of $\frac{\alpha}{2}$, and  $N_j=\frac{(N+2j-2)(N+j-3)!}{(N-2)!j!}$  for the multiplicity of the $j^{th}$ eigenvalue of the Laplace-Beltrami operator,
our estimates state as follows:

\begin{theorem}\label{morse-estimate-H}
	Assume that $\a\ge 0$ and $f$ satisfies H.1, and take $u$ a radial weak solution to \eqref{general-f-H}  with $m$ nodal zones satisfying {H.2}. 
	Then 
	\begin{align}	\label{general-radial-morse-H} 
	m_{\rad}(u)  &\ge m -1 , \\
	\label{general-morse-estimate-H}
	m(u)  & \ge   m_{\rad}(u) + (m-1)\sum\limits_{j=1}^{[\frac{2+\alpha}{2}]}N_{j} \ge (m-1)\sum\limits_{j=0}^{[\frac{2+\alpha}{2}]}N_{j} 
	\\
	\nonumber & = \left\{ \begin{array}{ll}
	(m-1)(1+N) & \mbox{if $0\le \a<2$, or} \\
	 (m-1) \left(1+N + \sum\limits_{j=1}^{[\frac{\alpha}{2}]}  N_{j+1} \right)
	& \text{ if } \a\geq 2 .
	\end{array}\right. 
\end{align}        
	If in addition $f$ fulfills 
	{H.3}, then 
	\begin{align}\label{radial-morse-f(u)-H} 
	 m_{\rad}(u) &\ge  m , \\
	\label{morse-estimate-f(u)-H}
	 m(u)  &\ge  m_{\rad}+	(m-1)\sum\limits_{j=1}^{[\frac{2+\alpha}{2}]}N_{j} \ge m+(m-1)\sum\limits_{j=1}^{[\frac{2+\alpha}{2}]}N_{j} 
	\\
	\nonumber &  = \left\{ \begin{array}{ll}
	m+(m-1)N & \mbox{if $0<\a<2$, or} \\
	m+(m-1) \left(N + \sum\limits_{j=1}^{[\frac{\alpha}{2}]}  N_{j+1}\right)
	& \text{ if } \a\geq 2 .
	\end{array}\right. \end{align}      
\end{theorem}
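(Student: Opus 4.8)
The plan is to reduce everything to the one-dimensional singular eigenvalue problem of \cite{AG-part1} and then to analyse that problem through a change of variables that makes the H\'enon weight disappear. First I would recall the spherical-harmonic decomposition: writing $\psi=\sum_{j\ge0}\phi_j(r)Y_j(\theta)$ with $Y_j$ the eigenfunctions of $-\Delta_{S^{N-1}}$ associated with $\lambda_j=j(j+N-2)$ (multiplicity $N_j$), the quadratic form ${\mathcal Q}_u$ splits over the modes and the contribution of the $j$-th mode is governed by the singular Sturm--Liouville operator $-\phi''-\frac{N-1}{r}\phi'+\frac{\lambda_j}{r^2}\phi-r^\alpha f'(u)\phi$ on the radial interval $(0,R)$ with weight $r^{N-1}$. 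By the characterization of \cite{AG-part1}, $m(u)$ (resp. $m_{\rad}(u)$) equals the sum over $j$ (resp. the $j=0$ term) of $N_j$ times the number of negative eigenvalues of this operator, the latter being controlled by the position of the eigenvalues of the associated singular problem relative to the thresholds $\lambda_j$. Thus the theorem is equivalent to two statements: that the mode $j=0$ carries at least $m-1$ negative eigenvalues (at least $m$ under H.3), and that every mode with $j\le\left[\frac{2+\alpha}{2}\right]$ carries at least $m-1$ negative eigenvalues.

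The central device is the substitution $t=r^{\beta}$, $\beta=\frac{2+\alpha}{2}$, $v(t)=u(r)$. A direct computation turns the radial equation $-u''-\frac{N-1}{r}u'=r^\alpha f(u)$ into the autonomous radial equation $-v_{tt}-\frac{M-1}{t}v_t=\beta^{-2}f(v)$ in the effective dimension $M=\frac{2(N+\alpha)}{2+\alpha}$, while the singular potential transforms as $\frac{\lambda_j}{r^2}\mapsto\frac{\lambda_j/\beta^2}{t^2}$, so the $j$-th H\'enon mode becomes the mode with effective angular eigenvalue $\tilde\lambda_j=\lambda_j/\beta^2$ of the autonomous problem in dimension $M$. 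The gain is that, differentiating the transformed equation, $v_t$ solves exactly $-(v_t)''-\frac{M-1}{t}(v_t)'+\frac{M-1}{t^2}v_t-\beta^{-2}f'(v)v_t=0$, i.e. it is a genuine zero mode of the operator attached to the effective angular eigenvalue $M-1$ — something $u_r$ fails to be in the original variable because of the spurious term $\alpha r^{\alpha-1}f(u)$. The arithmetic that produces the integer part is the elementary equivalence $\tilde\lambda_j\le M-1\iff j(j+N-2)\le\beta(\beta+N-2)\iff j\le\beta=\frac{2+\alpha}{2}\iff j\le\left[\frac{2+\alpha}{2}\right]$, which I would isolate as a short lemma.

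Next I would run the oscillation argument. The solution $u$ has $m$ nodal zones, so $u_r$, and hence $v_t$, has exactly $m-1$ interior zeros while $v_t$ does not vanish at the outer boundary; by the Sturm-type theory for the singular operator developed in \cite{AG-part1} this forces the mode-$(M-1)$ operator to have exactly $m-1$ negative Dirichlet eigenvalues. Monotonicity of the eigenvalues in the angular parameter (the potential $\frac{\tilde\lambda}{t^2}$ is increasing in $\tilde\lambda$) then yields at least $m-1$ negative eigenvalues for every mode with $\tilde\lambda_j\le M-1$, that is for every $j\le\left[\frac{2+\alpha}{2}\right]$; in particular the radial mode gives $m_{\rad}(u)\ge m-1$. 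Summing $N_j$ over $j=0,\dots,\left[\frac{2+\alpha}{2}\right]$ gives \eqref{general-radial-morse-H} and \eqref{general-morse-estimate-H}. For the improvement under H.3 I would instead test directly in $H^1_{0,\rad}(\Omega)$ with the $m$ nodal components $u_i$ of $u$ (each vanishing at the endpoints of its zone): integrating the equation by parts gives ${\mathcal Q}_u(u_i)=\int r^{N-1+\alpha}u\,(f(u)-f'(u)u)\,dr<0$ by H.3, and since the $u_i$ have disjoint supports they span an $m$-dimensional subspace on which ${\mathcal Q}_u$ is negative definite, whence $m_{\rad}(u)\ge m$ and \eqref{radial-morse-f(u)-H}, \eqref{morse-estimate-f(u)-H}.

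The hard part will be the oscillation/counting step across the singular weight: making rigorous that a zero mode with $m-1$ interior zeros which is nonzero at $r=R$ corresponds to exactly $m-1$ negative Dirichlet eigenvalues, controlling the admissible behaviour at the singular endpoint $r=0$ (where $M$ may be small, indeed $M=2$ when $N=2$), and justifying both the monotonicity and the very identification of $m(u)$ with the mode-by-mode counts. These are precisely the facts that the singular eigenvalue theory of \cite{AG-part1} is designed to supply, so the role here is to feed the transformed zero mode $v_t$ and the arithmetic $\tilde\lambda_j\le M-1\iff j\le\left[\frac{2+\alpha}{2}\right]$ into that machinery. The delicate point to verify is that the ``$m-1$ versus $m$'' gap is genuinely dictated by the non-Dirichlet boundary behaviour of $v_t$, which is exactly why the nonradial modes stop at $m-1$ whereas H.3 upgrades only the radial count to $m$.
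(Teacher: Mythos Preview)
Your proposal is correct and follows essentially the same route as the paper: transform to the autonomous problem in effective dimension $M$, observe that $v_t=w'$ is a (non-Dirichlet) null solution of the singular linearized operator at the level $-(M-1)$, and use Sturm-type oscillation together with the singular-eigenvalue machinery of \cite{AG-part1} to conclude that $\widehat\nu_i<-(M-1)$ for $i=1,\dots,m-1$ (this is precisely Proposition~\ref{stima-nu-H}); plugging this into the Morse-index formula of Proposition~\ref{general-morse-formula-H} produces exactly the integer-part arithmetic $j\le\left[\frac{2+\alpha}{2}\right]$ that you isolate, and the H.3 upgrade via the nodal components of $u$ is verbatim the paper's argument. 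One small caution: without the extra hypothesis $f(s)/s>0$ one can only assert that $v_t$ has \emph{at least} $m-1$ interior zeros (not exactly $m-1$), but ``at least'' is all that the lower bounds require, so this does not affect the proof.
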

The proof of Theorem \ref{morse-estimate-H} relies on a transformation of the radial variable which, like the one in \cite{dosSP}, brings radial solutions to problem \eqref{general-f-H} into solutions of a suitable autonomous o.d.e (see \cite[Sect. 4.1]{AG-part1}). The main difference in our approach is  that we compute the Morse index starting from  a  {\it singular} eigenvalue problem studied in the  the first part of this paper, \cite{AG-part1}. In that way the core of the proof stands in an estimate of the
{\it singular}  eigenvalues given in Proposition \ref{stima-nu-H}.
Such estimate, together with \cite[Corollary 4.11]{AG-part1},  
allows  to obtain informations also on the Morse index in symmetric spaces and has interesting implications on the multiplicity of solutions, as discussed with more details at the end of Section  \ref{sec:4}.

	Let us remark by now an immediate but interesting consequence of estimate \eqref{general-morse-estimate-H}.
	\begin{corollary}\label{morse-index-alpha-infinity}
	Assume that $\a\ge 0$ and $f$ satisfies H.1, and take $u$ a radial weak solution to \eqref{general-f-H}  with $m\ge 2$ nodal zones satisfying {H.2}.
	Then the Morse index of $u$ goes to infinity  as $\a \to +\infty$.
	\end{corollary}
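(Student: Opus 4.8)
The plan is to read off the conclusion directly from estimate \eqref{general-morse-estimate-H} of Theorem \ref{morse-estimate-H}, which requires only hypotheses H.1 and H.2 and is therefore available under exactly the assumptions of the corollary. Since $m\ge 2$ we have $m-1\ge 1$, so the theorem gives
\begin{equation*}
m(u)\ \ge\ (m-1)\sum_{j=0}^{[\frac{2+\alpha}{2}]}N_{j}\ \ge\ \sum_{j=0}^{[\frac{2+\alpha}{2}]}N_{j}.
\end{equation*}
The whole matter is thus reduced to showing that the right-hand side diverges as $\alpha\to+\infty$.

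First I would observe that each $N_{j}$ is a strictly positive integer, being the multiplicity of the $j^{th}$ eigenvalue of the Laplace--Beltrami operator on the sphere; in particular $N_{j}\ge 1$ for every $j\ge 0$ (indeed $N_0=1$ and $N_1=N$). Consequently the partial sum obeys the crude bound
\begin{equation*}
\sum_{j=0}^{[\frac{2+\alpha}{2}]}N_{j}\ \ge\ \left[\tfrac{2+\alpha}{2}\right]+1 .
\end{equation*}
Next I would note that the integer part $\left[\tfrac{2+\alpha}{2}\right]$ is nondecreasing in $\alpha$ and tends to $+\infty$ as $\alpha\to+\infty$, since $\left[\tfrac{2+\alpha}{2}\right]\ge \tfrac{\alpha}{2}$. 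Combining these two remarks yields $m(u)\ge \tfrac{\alpha}{2}$ for all $\alpha$ large enough, whence $m(u)\to+\infty$.

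There is no genuine obstacle here: the corollary is an immediate quantitative consequence of Theorem \ref{morse-estimate-H}, and the only thing to verify is the elementary positivity and growth of the spherical-harmonic multiplicities $N_j$. The single point deserving a line of justification is that the hypotheses on $u$ (radial weak solution with $m\ge 2$ nodal zones, satisfying H.2) are precisely those under which \eqref{general-morse-estimate-H} holds, so that the estimate may be invoked uniformly in $\alpha$.
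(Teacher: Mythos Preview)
Your proof is correct and matches the paper's approach exactly: the paper presents Corollary \ref{morse-index-alpha-infinity} as ``an immediate but interesting consequence of estimate \eqref{general-morse-estimate-H}'' and gives no further argument, so your write-up is precisely the elaboration of that one line. The only superfluous remark is the last one about invoking the estimate ``uniformly in $\alpha$'': since the solution $u$ itself varies with $\alpha$, what is actually being used is simply that the lower bound in \eqref{general-morse-estimate-H} depends only on $m$ and $\alpha$ and diverges as $\alpha\to+\infty$.
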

		This result holds only for sign-changing solutions and indeed cannot be true in the case of positive ones, as shown in \cite{AG-henon} where the positive solution has Morse index one for every value of $\a>0$, for some particular choice of the function $f$.
		\\
               After this paper was finished we came to know that Corollary \ref{morse-index-alpha-infinity} was previously presented in the paper \cite{LWZ} for $p$-homogeneous nonlinearities. Their result generalizes also to the case of systems. Following an idea of \cite{BW} they transform problem \eqref{general-f-H} into an equivalent one and they perform a blow-up analysis as $\a\to \infty$. A Liouville theorem for the limiting problem, included in the paper, then implies the result. 
                  Let us observe that the strategy of \cite{LWZ} is complementary to ours. Indeed our result does not relies on an asymptotic analysis and produces informations for every fixed value of $\a$. 

\

We conclude our paper by dealing with the particular case of power-type non-linearity, i.e.~ with the H\'enon problem 
\begin{equation}\label{H-intro}
\left\{\begin{array}{ll}
-\Delta u = |x|^{\alpha}|u|^{p-1} u \qquad & \text{ in } \Omega, \\
u= 0 & \text{ on } \partial \Omega ,
\end{array} \right.
\end{equation}
 that has been introduced by H\'enon in \cite{H} to study stellar clusters. Attention to this problem has been brought by the existence result in \cite{Ni} and by the breaking of symmetry of the ground state solution in \cite{SSW}. After that the H\'enon problem attracted the attention of many authors, and the interested reader
 %{\taglia Many other authors took interest in this issue and the reader involved} 
 can see among others the following ones \cite{AG-henon,AG18,AG18-2,BW,BW2, CP, CLP18,CD17,GG,KW,PS,WY}.
We recall that a solution $u$ is said radially degenerate if the linearized equation $L_u(\psi)=0$ admits a radial solution in $H^1_0(\Omega)$. By investigating the {\it singular} radial eigenvalues related to \eqref{H-intro}, we are able to show that
\begin{theorem}\label{prop:ultimo-H}
	Let $\alpha\ge 0$ and $u  \in H^1_0(\Omega)$ a radial solution to \eqref{H-intro} with $m$ nodal zones. Then $u$ has radial Morse index $m$ and is radially non-degenerate. 
\end{theorem}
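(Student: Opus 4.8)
The plan is to combine the singular eigenvalue characterization of \cite{AG-part1} with the scaling invariance of the power nonlinearity. \textbf{First}, observe that $f(s)=|s|^{p-1}s$ satisfies H.3: indeed $f'(s)-f(s)/s=(p-1)|s|^{p-1}>0$ for $s\neq0$ and $p>1$. Hence the sharp case of Theorem \ref{morse-estimate-H} applies and already gives the lower bound $m_{\rad}(u)\ge m$. It therefore remains to prove the reverse inequality $m_{\rad}(u)\le m$ together with radial non-degeneracy. Recall from \cite{AG-part1} that both quantities are encoded in the radial singular eigenvalue problem (the mode $j=0$, corresponding to the Laplace--Beltrami threshold $\lambda_0=0$): writing $\hat\nu_1<\hat\nu_2<\dots$ for the singular eigenvalues associated to $L_u$, one has $m_{\rad}(u)=\#\{i:\hat\nu_i>0\}$, while $u$ is radially degenerate if and only if $\hat\nu_i=0$ for some $i$. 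Thus the theorem is equivalent to showing that exactly $m$ of the $\hat\nu_i$ are positive and that none of them vanishes.

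\textbf{The key object} is the radial function produced by the scaling invariance of \eqref{H-intro}. Since the interior equation $-\D v=|x|^{\a}|v|^{p-1}v$ is preserved by $v\mapsto \lambda^{(2+\a)/(p-1)}v(\lambda\,\cdot)$ (only the underlying domain rescales), differentiating the one–parameter family $u_\lambda(x)=\lambda^{(2+\a)/(p-1)}u(\lambda x)$ at $\lambda=1$ yields a solution of the linearized equation,
\begin{equation*}
Z(r):=\frac{2+\a}{p-1}\,u(r)+r\,u'(r),\qquad L_u Z=0\ \text{ in }\Om .
\end{equation*}
The function $Z$ is regular at the origin, with $Z(0)=\frac{2+\a}{p-1}u(0)$, but at the boundary $r=R$ of the ball one has $Z(R)=R\,u'(R)\neq0$ by Hopf's lemma (elliptic regularity upgrades the radial weak solution to $C^1$ up to $\pa\Om$, and $u'(R)\ne0$ since $u\not\equiv0$ near the boundary).

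\textbf{Non-degeneracy and the index} now follow from $Z$. The radial solutions of the second order ODE $L_u\psi=0$ that are regular at the origin form a one–dimensional space, spanned by $Z$; since $Z(R)\neq0$, none of them belongs to $H^1_0(\Om)$, so $u$ is radially non-degenerate (equivalently, $\hat\nu=0$ is not a singular eigenvalue). Moreover, by Sturm oscillation theory the radial Morse index equals the number of zeros in $(0,R)$ of the zero–energy solution $Z$ that is regular at the origin, which in turn equals $\#\{i:\hat\nu_i>0\}$. A sign analysis across the $m$ nodal zones gives the lower count: if $0<s_1<\dots<s_{m-1}<R$ are the zeros of $u$, then $Z(s_k)=s_k\,u'(s_k)$ alternates in sign, while near each extremum $Z$ carries the sign of $u$; hence $Z$ changes sign at least once in each nodal zone, so it has at least $m$ interior zeros.

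\textbf{The main obstacle} is the matching upper bound: that $Z$ has \emph{exactly} one zero in each nodal zone, equivalently $\hat\nu_{m+1}<0$. This is the content of the singular eigenvalue estimate in Proposition \ref{stima-nu-H}, and it cannot be obtained from a crude monotonicity of $Z$, since at a zero of $Z$ the sign of $Z'$ ceases to be definite once $p$ approaches the critical exponent. The way to carry it out is to pass, as in \cite[Sect.~4.1]{AG-part1}, to the Emden--Fowler variable that turns the radial problem into an autonomous Lane--Emden equation of fractional dimension $\tilde N=\frac{2(N+\a)}{\a+2}$. There the energy $F=\tfrac12 (w')^2+\tfrac{c}{p+1}|w|^{p+1}$ is non-increasing, which forces $w$ to have a single extremum per nodal zone; on the ascending half of each zone the transformed scaling function is sign-definite, while on the descending half it crosses zero exactly once. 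Feeding the resulting count into the monotonicity of the zero number with respect to the parameter $\hat\nu$ (\cite[Corollary 4.11]{AG-part1}) yields $\hat\nu_m>0>\hat\nu_{m+1}$ with no vanishing eigenvalue, i.e.\ $m_{\rad}(u)=m$ and radial non-degeneracy. For a general radial domain the scaling function meets neither boundary condition, and one relies directly on the two–sided bounds of Proposition \ref{stima-nu-H} in place of the explicit zero count of $Z$.
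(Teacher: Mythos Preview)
Your overall strategy coincides with the paper's: one introduces the scaling field
\[
Z(r)=\tfrac{2+\a}{p-1}\,u(r)+r\,u'(r)\qquad\text{(equivalently }z=t\,v'+\tfrac{2}{p-1}v\text{ after \eqref{transformation-henon-no-c})},
\]
observes that $L_uZ=0$ and $Z(R)\neq 0$, and reduces the problem to showing that $Z$ (equivalently $z$) has \emph{exactly} $m$ zeros in $(0,R)$. Non-degeneracy and the lower count are fine (modulo a sign slip: in the paper's convention $m_{\rad}(u)$ counts \emph{negative} $\widehat\nu_i$, so one wants $\widehat\nu_m<0\le\widehat\nu_{m+1}$, not the reverse).

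The genuine gap is the upper count. You write that the bound ``$Z$ has exactly one zero per nodal zone'' is ``the content of the singular eigenvalue estimate in Proposition~\ref{stima-nu-H}'', but that proposition only locates the negative $\widehat\nu_i$ relative to $-(M-1)$; it says nothing about how many there are beyond the first $m-1$, and in particular does not preclude $\widehat\nu_{m+1}<0$. Your fallback argument (energy monotonicity forces a single extremum of $w$ per zone, hence $z$ ``crosses zero exactly once on the descending half'') is not a proof: the energy identity controls the zeros of $w'$, not those of $z=t w'+\frac{2}{p-1}w$, and you yourself note that the sign of $Z'$ at a zero of $Z$ becomes indefinite near the critical exponent, so a monotonicity argument on $z$ is not available. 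The same objection applies to your last sentence about general radial domains.

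What the paper actually uses is an extra piece of information you never invoke: by the Nehari construction, the restriction of $v$ to each nodal interval $(t_i,t_{i+1})$ is the \emph{unique} positive radial solution there, hence its radial linearization has exactly one negative Dirichlet eigenvalue, i.e.\ $\nu_2\ge 0$ on each $(t_i,t_{i+1})$. A Sturm comparison between $z$ (which solves the linearized equation with eigenvalue $0$) and the second Dirichlet eigenfunction $\phi_2$ on $(t_i,t_{i+1})$ then forces $z$ to have at most one zero in that interval (Lemma~\ref{lem:zeri-zeta}). With the zero count of $z$ equal to $m$, the paper concludes by a second Sturm comparison: if $\nu_{m+1}\le 0$, its eigenfunction $\psi_{m+1}$ has $m$ interior zeros, and comparing with $z$ produces at least $m+1$ zeros of $z$, a contradiction. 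The missing ingredient in your argument is precisely the ``local Morse index $=1$'' input on each nodal zone.
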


Theorem \ref{prop:ultimo-H} includes also the Lane-Emden problem ($\alpha=0$). For that problem both the radial non-degeneracy and the value of the radial Morse index had already been obtained in \cite{HRS} with a completely different approach. Their proof adapts to deal with some non-autonomous problems, but their assumptions do not include the H\'enon problem and they only handle variational problems (i.e.~subcrictical exponents). 

Beside for the H\'enon problem an easy corollary follows from the Morse index estimate in Theorem \ref{morse-estimate-H} 
\begin{corollary}\label{cor-ultimo}
Let $\alpha\ge 0$ and $1<p<\frac{N+2}{N-2}$ if $N\ge 3$, or $1<p$ in dimension $N=2$.
A least energy nodal solution to \eqref{H-intro} is not radial.
\end{corollary}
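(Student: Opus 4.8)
The plan is to argue by contradiction, playing the Morse index of a least energy nodal solution against the lower bound for the Morse index of \emph{radial} nodal solutions supplied by Theorem \ref{morse-estimate-H}.

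First I would fix the variational setting. For $p$ in the Sobolev--subcritical range the energy functional associated to \eqref{H-intro} is of class $C^1$ on $H^1_0(\Omega)$, and since $\Omega$ is bounded the weight $|x|^\a$ is bounded; as the embedding $H^1_0(\Omega)\hookrightarrow L^{p+1}(\Omega)$ is compact, a minimizer of the energy over the nodal Nehari set (the sign-changing functions whose positive and negative parts both lie on the Nehari manifold) exists and is a sign-changing solution of \eqref{H-intro}. The key property, classical in this framework (see \cite{BCW,BW}), is that such a least energy nodal solution $u$ has Morse index exactly $2$.

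Next I would assume, for contradiction, that this least energy nodal solution $u$ is radial. Being sign-changing it has $m\ge 2$ nodal zones, and since $f(s)=|s|^{p-1}s$ satisfies $f'(s)=p|s|^{p-1}>|s|^{p-1}=f(s)/s$ for $s\neq 0$ and $p>1$, hypothesis {H.3} holds. I would therefore invoke estimate \eqref{morse-estimate-f(u)-H} of Theorem \ref{morse-estimate-H}; as the sum occurring there always contains the term $N_1=N$, for every $\a\ge 0$ it gives
\[
m(u)\ \ge\ m+(m-1)\sum_{j=1}^{[\frac{2+\a}{2}]}N_j\ \ge\ m+(m-1)N\ \ge\ 2+N .
\]
Because $N\ge 2$ this forces $m(u)\ge 4>2$, contradicting the value $2$ of the Morse index of a least energy nodal solution; hence $u$ cannot be radial. (Even the weaker bound \eqref{general-morse-estimate-H}, valid without {H.3}, already yields $m(u)\ge(m-1)(1+N)\ge 3$, which would suffice.)

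The genuinely delicate point is not the final contradiction, which is immediate once the two bounds are in hand, but the justification of the two variational inputs in the present weighted problem: the existence of a least energy nodal solution and, above all, the exact value $2$ of its Morse index. Existence follows routinely from the compactness of the embedding for subcritical $p$, whereas the Morse index count rests on the min--max characterization of the minimizer over the nodal Nehari set and is the step I would develop with the most care.
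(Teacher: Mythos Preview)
Your argument is correct and follows essentially the same route as the paper: the least energy nodal solution has Morse index $2$ (the paper cites \cite{Bartsch-Weth} for this), while any radial nodal solution has Morse index at least $3$ by estimate \eqref{general-morse-estimate-H}, giving the contradiction. The paper actually invokes only the weaker bound \eqref{general-morse-estimate-H} (not requiring H.3), which as you yourself observe already suffices.
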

This result follows easily by Morse index considerations and was previously known only for small values of $\a$ in \cite{BDRT}. 
It generalizes previous results for autonomous problem in \cite{AP} and \cite{BDG} and can be proved for more general nonlinearities when problem \eqref{general-f-H} admits a variational structure (see as an example assumptions $f_1, f_2,f_3, f_4$ in \cite{BW}), by relying on Theorem \ref{morse-estimate-H}.
On the other hand the same symmetry breaking phenomenon was already proved for the ground state solution to \eqref{H-intro}  in \cite{SSW},  by estimating the energy of the positive radial solution, but it holds only for large values of $\a$.

\remove{\taglia This result follows by Morse index considerations and 
can be obtained from \eqref{general-morse-estimate-H} for more general nonlinearities when problem \eqref{general-f-H} admits a variational structure (see as an example assumptions $f_1, f_2,f_3, f_4$ in \cite{BW} or in \cite{AG-part1}) and generalizes previous results for autonomous 
problem in \cite{AP} and \cite{BDG}. Let us recall that the same symmetry breaking phenomenon for positive least energy solutions to \eqref{H-intro} was proved in \cite{SSW}, in this subcritical setting, by estimating the energy of the positive radial solution, but only for large values of $\a$.}

Finally we mention that, starting from the Morse index formula in \cite[Proposition 1.4]{AG-part1}, Theorem \ref{prop:ultimo-H} and the estimates of the {\it singular} eigenvalues obtained in Proposition \ref{stima-nu-H}, we are able to compute the Morse index of radial solutions to \eqref{H} when the parameter $p$ goes to the end of the existence range, by means of   a careful investigation of the asymptotic behaviour of the solution as well of the {\it singular} radial eigenvalues and eigenfunctions that we defer to the papers \cite{AG18} and \cite{AG18-2}.

 \section{Preliminaries}\label{sec:2}

 In this section we give all the notations we need in the following, we introduce the 
 singular eigenvalue problems that have been the subject of \cite{AG-part1} and we recall their relation with the Morse index of a solution $u$ to \eqref{general-f-H} that we need to prove the main results. 
Since this paper is the sequel of \cite{AG-part1} we suggest to read the first part where some properties of the singular eigenvalues and eigenfunctions are proved.	\\
In the following $\Omega$ denotes a bounded radially symmetric domain of $\R^{N}$,  while $B=\{x\in\R^N \, : \, |x|<1\}$ is the unit ball. In the end of this section we will focus on the case when $\Omega=B$  since the case of the annulus is easier and can be deduced from this one. \\
	We shall make use of the following functional spaces: $
   C^1_0(\Omega):= \{ v:\Omega\to \R \, : \, v $ differentiable, $\nabla v $ continuous and the support of $ v $ is a compact subset of $\Omega \} $; for any $p>1$ we let 
$  L^p(\Omega)$ be the usual Lebesgue spaces;
%= \{ v:\Omega\to \R \, : \, v $ measurable with $ \int_{\Omega} |v|^p dx <+\infty\}$; 
while $H^1(\Omega)$ and $H^1_0(\Omega)$ are the Sobolev spaces, namely 
$  H^1(\Omega) := \{ v\in L^2(\Omega) \, : \, v $  has first order weak derivatives $ \partial_{i}v \text{ in } L^2(\Omega) \text{ for }i=1,\dots,N \}$; $  H^1_0(\Omega) := \{ v\in H^1(\Omega) \, : \, v(x) =0 \mbox { if } x\in \partial \Omega \} $; and $H^1_\rad(\Omega)$ and $  H^1_{0,\rad}(\Omega)$ are the subspaces given by radial functions, namely 
$   H^1_\rad(\Omega):= \{v\in H^1(\Omega)\, : \, v \text{ is radial }\}$;
$  H^1_{0,\rad}(\Omega) := H^1_0(\Omega)\cap  H^1_\rad(\Omega)$.

\remove{{\AL \edz{cosa vuoi fare con questo pezzo?
{\F Alcune definizioni sono gi\'a in Intro non le ripeterei. La parte sul Morse simmetrico la darei in sez 3 prima del corollario dove era gi\'a ricordato. C'\'e qualcosa che vuoi lasciare?
} tolgo dopo aver controllato se abbiamo gi\`a scritto tuto dove serve} Given a weak solution $u$ of \eqref{general-f-H} the Morse index of $u$, that we denote by $m(u)$, is the maximal dimension of a subspace of $H^1_0(\Omega)$ in which the quadratic form 
		\begin{align}
	\label{forma-quadratica}
	{\mathcal Q}_u(\psi)& :=\int_\Omega \left(|\nabla \psi|^2 -|x|^\a f'(u)\,\psi^2\right) dx
	\end{align}
	is negative defined. Under assumptions {H.1} and {H.2} the linearized operator at $u$, 
		\begin{align}
	\label{linearized}
	L_u(\psi)&:=-\Delta \psi-|x|^\a f'(u)\psi
	\end{align}
	is compact on $H^1_0(\Omega)$, so the Morse index is, equivalently,  the number of the negative eigenvalues of $L_u$ in $H^1_0(\Omega)$, counted with multiplicity. 
	In the same way the solution $u$ is said degenerate if $L_u$ has nonempty kernel, i.e. if $0$ is an eigenvalue for $L_u$  in $H^1_0(\Omega)$.  %\edz{\AL Qui richiamerei la definizione di indice di Morse e degenerazione, e scriverei $L_u$ e $\mathcal Q_u$ col numero. Ho abbozzato qualcosa, decidi tu dove metterlo. Mi ricordo che avevi detto che non avresti voluto parlare qui di degenerazione. A mio avviso \`e pi\`u indolore inserirla qui che richiamare tutto dall'inizio in Sec \ref{sec:4}, ma decidi tu. Attualmente mi pare non sia detta da nessuna parte}
	When the solution $u$ has some symmetry, namely when it is invariant under the action of a  subgroup $\mathcal{G}$ of the orthogonal group $O(N)$,
	one can look at the operator $L_u$ restricted to  the subspace $H^1_{0,\mathcal{G}}(\Omega)\ :=\{ w\in H^1_0(\Omega) \, : \, w \text{ is $\mathcal{G}$-invariant}\}$, and define accordingly 	the $\mathcal{G}$-Morse index  and the  $\mathcal{G}$-degeneracy.
	In particular when $u$ is a radial solution the radial Morse index of $u$, denoted by $m_\rad(u)$, is the number of the negative eigenvalues of $L_u$ in $H^1_{0,\rad}(\Omega)$ (the subspace of $H^1_0(\Omega)$ given by radial functions), and we say that $u$ is radially degenerate if $0$ is an eigenvalue for $L_u$ which admits a radial eigenfunction. 
\\}}
Following  \cite{AG-part1} we use some singular eigenvalues associated to the linearized operator $L_u$ to characterize the Morse index  of a solution $u$ to \eqref{general-f-H}. 
To define them we need some weighted Lebesgue and Sobolev spaces that we denote by
   \begin{align*}
  {\mathcal L} & := \{\psi:  \Omega\to \R\, : \,  \psi \text{ measurable and s.t } \int_\Omega |x|^{-2}\psi^2\, dx<\infty\},
\\ 
   \mathcal{H} &:=H^1(\Omega)\cap  {\mathcal L}, \quad \mathcal{H}_0 :=H_0^1(\Omega)\cap \mathcal L,
\quad
\mathcal{H}_{0,\rad} :=  \mathcal{H}\cap H^1_{0,\rad}(\Omega) , 
\end{align*}
$\mathcal L$ is a Hilbert space with the scalar product $\int_\Omega |x|^{-2}\eta\varphi\ dx$, so that 
\begin{equation}\label{scalar-H}
\eta\underline{\perp}\varphi\  \ \Longleftrightarrow \int_\Omega |x|^{-2}\eta\varphi\ dx=0  \  \ \text{ for }\eta, \varphi\in  {\mathcal L}.
\end{equation}

Next we introduce the singular eigenvalues that have been studied in \cite[Section 3]{AG-part1} and we let 
\begin{equation}\label{eq:primo-autov}
\widehat \L_1:=
 \inf \left\{ \frac{Q_u(\psi)}{\int_{\Omega} |x|^{-2}\psi^2(x)\, dx}  : \, \psi\in  {\mathcal H}_0\setminus\{0\}, \right\}
\end{equation}
where $Q_u(\psi)$ is as defined in \eqref{forma-quadratica}. This first singular eigenvalue $\widehat \L_1$ is attained, when $\widehat \L_1<\left(\frac{N-2}2\right)^2$ 
 at a function $\varphi_1\in \mathcal H_0$. Iterating, when $\widehat \L_{i-1}<\left(\frac{N-2}2\right)^2$ and it is attained
at a function $\varphi_{i-1}\in \mathcal H_0$, we can then define the subsequent eigenvalue
\begin{equation}\label{i+1-singular}
\widehat \L_{i}:= \inf \left\{ \frac{Q_u(\psi)}{\int_{\Omega} |x|^{-2}\psi^2(x)\, dx}  : \, \psi\in  {\mathcal H}_0\setminus\{0\}, \,  w\underline{\perp} \varphi_1,\dots, \varphi_{i-1} \right\},
\end{equation}
where the orthogonality stands for the orthogonality in $\mathcal L$. Again $\widehat \L_i$ is attained as far as it 
satisfies $\widehat \L_i<\left(\frac{N-2}2\right)^2$. Every eigenfunction $\varphi_i\in \mathcal H_0$ associated with $\widehat \L_i$ is a weak solution to the singular eigenvalue problem
\begin{equation}\label{eq:autovalori-singolari}
\left\{\begin{array}{ll}
-\Delta \varphi_i - |x|^\a f'(u) \varphi_i = \frac{\widehat \L_i}{|x|^2}\varphi_i \qquad & \text{ in } \Omega, \\
\varphi_i= 0 & \text{ on } \partial \Omega,
\end{array} \right.
\end{equation}
meaning that it satisfies
\[
\int_{\Omega}\nabla \varphi_i\nabla \phi -|x|^\a f'(u) \varphi_i \phi dx=\widehat \L_i \int_\Omega |x|^{-2}\varphi_i\phi dx
\]
 for every $\phi\in \mathcal H_0$. We need also the radial version of the singular eigenvalues and so we let
\begin{equation}\label{radial-singular-uno}
\widehat \L_{1}^{\rad}:=\inf \left\{ \frac{Q_u(\psi)}{\int_{\Omega} |x|^{-2}\psi^2(x)\, dx}  : \, \psi\in  {\mathcal H}_{0,\rad}\setminus\{0\}\right\}
\end{equation}
which is attained when $\widehat \L_{1}^{\rad}<\left(\frac{N-2}2\right)^2$ at a function $\varphi_1^\rad\in \mathcal H_{0,\rad}$ and, as before, whenever $\widehat \L_{i-1}^\rad<\left(\frac{N-2}2\right)^2$ and it is attained
at a function $\varphi_{i-1}^\rad\in \mathcal H_{0,\rad}$, we can then define the subsequent eigenvalue
\begin{equation}\label{radial-singular-i}
\widehat \L_{i}^{\rad}:=\inf \left\{ \frac{Q_u(\psi)}{\int_{\Omega} |x|^{-2}\psi^2(x)\, dx}  : \, \psi\in  {\mathcal H}_{0,\rad}\setminus\{0\}, \,  \psi\underline{\perp}\varphi_1^{\rad},\dots,\varphi_{i-1}^{\rad} \right\}.
\end{equation}
The interest in the singular eigenvalues stands in the fact that, even for semilinear problems more general than \eqref{general-f-H},  the Morse index of any solution $u$ can be computed by counting, with multiplicity, the singular eigenvalues $\widehat \L$, while the radial Morse index of a radial solution $u$ is the number of negative singular radial eigenvalue $\widehat\L^{\rad}$, see \cite[Proposition 1.1]{AG-part1}. 
	Further when $u$ is radial they 
have the good property 
a decomposition along radial and angular part holds. We collect here into one statement (adapted to the particular case \eqref{general-f-H}) the main results in \cite{AG-part1} about this topic
recalling that $\l_j$ are the eigenvalues of the Laplace Beltrami operator on the sphere $S^{N\!-\!1}$, namely 
$-\Delta_{S^{N\!-\!1}} Y_j=\l_jY_j$ for 
\[\l_j=j(N-2+j)\]
and whose multiplicity is 
\[N_j:=\frac{(N+2j-2)(N+j-3)!}{(N-2)!j!}\]
and  $Y_j=Y_j(\theta)$ are the eigenfunctions of $-\Delta_{S^{N-1}}$ associated with $\l_j$ and they are known as Spherical Harmonics.

\begin{proposition}\label{general-morse-formula} 
Assume that $\a\geq 0$ and $f$ satisfies $H.1$ and take $u$ a radial weak solution to \eqref{general-f-H} satisfying $H.2$.	
	Then its radial Morse index $m_{\rad}$ is the number of negative eigenvalues $\widehat\L^{\rad}_i$ according to \eqref{radial-singular-i}, and its 
		Morse index is given by 
	\begin{equation}\label{tag-2}\begin{split}
	m(u)= & \sum\limits_{i=1}^{m_{\rad}}
	 \sum\limits_{j=0}^{\lceil J_i -1\rceil } N_j  \qquad \qquad  \mbox{where} \\
	J_i= & \sqrt{\left(\frac{\n-2}{2}\right)^2-\widehat\L^{\rad}_i}-\frac{\n-2}{2}  
	\end{split}\end{equation}
	 and $\lceil t \rceil = \min\{ k\in {\mathbb Z} \, : \, k\ge t\}$ stands for the ceiling function.
	 \\
		Besides the negative singular eigenvalues are $\widehat\L = \widehat\L^{\rad}_i +\lambda_j$ 
		and the related eigenfunctions are, in spherical coordinates
		\begin{equation}\label{decomp-autofunz-f(u)} \psi(x)=\widehat \psi_i^{\rad}(r)Y_j(\theta), \end{equation} 
		where $\widehat \psi_i^{\rad}$ is an eigenfunction related to $\widehat\L^{\rad}_i$.	
	\end{proposition}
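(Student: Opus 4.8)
The plan is to reduce everything to the one–dimensional (radial) singular problem by separation of variables, and then to count. The starting point is the characterization proved in \cite[Proposition 1.1]{AG-part1}: $m(u)$ equals the number of negative singular eigenvalues $\widehat\L$ defined through \eqref{i+1-singular}, counted with multiplicity, while $m_\rad(u)$ equals the number of negative radial singular eigenvalues $\widehat\L^\rad_i$ from \eqref{radial-singular-i}. This immediately yields the first assertion, $m_\rad(u)=\#\{i:\widehat\L^\rad_i<0\}$, so the remaining task is to express the full list of singular eigenvalues in terms of the radial ones.

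Since $u=u(|x|)$ is radial, the potential $|x|^\a f'(u)$ and the singular weight $|x|^{-2}$ in \eqref{eq:autovalori-singolari} are radially symmetric, so the operator commutes with the action of $O(N)$ and should admit a spectral decomposition along spherical harmonics. Concretely, I would write a trial function in spherical coordinates as $\psi(x)=g(r)Y_j(\theta)$ and use $-\Delta\psi=\big(-g''-\frac{N-1}{r}g'+\frac{\lambda_j}{r^2}g\big)Y_j$ together with $-\Delta_{S^{N-1}}Y_j=\lambda_j Y_j$. Substituting into \eqref{eq:autovalori-singolari} shows that $g(r)Y_j(\theta)$ is a singular eigenfunction with eigenvalue $\widehat\L$ if and only if $g$ solves the radial singular problem with parameter $\widehat\L-\lambda_j$; that is, the separated modes give exactly $\widehat\L=\widehat\L^\rad_i+\lambda_j$ with eigenfunctions $\widehat\psi_i^\rad(r)Y_j(\theta)$, each value inheriting the multiplicity $N_j$ of the angular factor. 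The completeness of this decomposition inside the weighted space $\mathcal H_0$ --- i.e.\ that every singular eigenfunction is a combination of such products and that no eigenvalue is lost --- is the genuinely delicate point, and is supplied by the spectral theory of Part I. One must in particular check that the expansion $\psi=\sum_j g_j(r)Y_j(\theta)$ is compatible with the constraint $\int_\Omega |x|^{-2}\psi^2\,dx<\infty$ defining $\mathcal L$, so that the radial components $g_j$ land in $\mathcal H_{0,\rad}$ and their variational eigenvalues coincide with the $\widehat\L^\rad_i$ of \eqref{radial-singular-i}.

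It then remains to count. A singular eigenvalue $\widehat\L^\rad_i+\lambda_j$ is negative precisely when $\lambda_j=j(N-2+j)<-\widehat\L^\rad_i$; since $\lambda_j\ge 0$ this forces $\widehat\L^\rad_i<0$, i.e.\ $1\le i\le m_\rad$, and for each such $i$ the admissible angular indices are the non-negative integers $j$ solving $j^2+(N-2)j+\widehat\L^\rad_i<0$. Solving this quadratic gives $j<\sqrt{\big(\frac{N-2}{2}\big)^2-\widehat\L^\rad_i}-\frac{N-2}{2}=J_i$, equivalently $0\le j\le\lceil J_i-1\rceil$; note $J_i$ is real here because $\widehat\L^\rad_i<0<\big(\frac{N-2}{2}\big)^2$, and the strict inequality correctly discards the borderline case $\widehat\L^\rad_i+\lambda_j=0$. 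Summing the multiplicities $N_j$ over these $j$ and then over $i=1,\dots,m_\rad$ produces \eqref{tag-2}. The main obstacle is thus not the counting but justifying the separation of variables as a genuine spectral decomposition in the singular weighted setting, which is exactly why the heavy lifting is deferred to \cite{AG-part1}.
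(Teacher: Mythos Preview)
Your proposal is correct and follows the natural strategy: invoke \cite[Proposition 1.1]{AG-part1} to reduce the Morse index to a count of negative singular eigenvalues, decompose in spherical harmonics to obtain $\widehat\L=\widehat\L^{\rad}_i+\lambda_j$, and then count the pairs $(i,j)$ giving negative values. Note however that the paper does not actually prove this proposition here; it is stated in Section~\ref{sec:2} as a collection of results already established in \cite{AG-part1} (see the sentence preceding the statement: ``We collect here into one statement (adapted to the particular case \eqref{general-f-H}) the main results in \cite{AG-part1}\dots''). Your outline is therefore a sketch of what is carried out in Part~I rather than a comparison with a proof in the present paper, and you have correctly identified the one genuinely nontrivial step --- the completeness of the spherical-harmonic decomposition in the weighted space $\mathcal H_0$ --- as the point requiring the machinery of \cite{AG-part1}.
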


In the radial setting problem \eqref{general-f-H} is related to an autonomous one by means of the transformation
\begin{equation}\label{transformation-henon}
 t=r^{\frac{2+\a}{2}} ,\qquad w(t)=u(r) ,
\end{equation}
 which has been introduced in \cite{GGN} and maps any  radial solution $u$ of 
 \eqref{general-f-H}  into a  solution $w$ of  
\begin{equation}\label{ode-sect-2}
%\begin{cases}
- \left(t^{M-1} w^{\prime}\right)^{\prime}= \left(\frac{2}{2+\a}\right)^2 t^{M-1} f(w)  , \qquad   0<t< 1,
%\\w'(0)=0, \quad w(1)=0\]
%\end{cases}
\end{equation}
where
\begin{align}\label{Malpha}
M & = M(N,\alpha):= \frac{2(N+\alpha)}{2+\alpha}\in[2,N] 
\end{align} 
with some boundary conditions that depends on the case when $\Omega$ is a ball and when $\Omega$ is an annulus. As explained in \cite{AG-part1} the Morse index of $u$ can be computed in terms of some singular eigenvalues associated with the linearization to \eqref{ode-sect-2} at $w$, if $u$ and $w$ are related by \eqref{transformation-henon}. 
Since the topic is slightly different when $\Omega$ is a ball or an annulus, we focus here on the case when $\Omega$ is the unit ball since 
the case of the annulus can be easily deduced from this one. \\
In this case the function $w$ satisfies the boundary conditions
\begin{equation}\label{bc}
w'(0)=0, \qquad w(1)=0\end{equation}
and to deal with the singular eigenvalues
for any $M\ge 2$, we define
    \begin{align*}
  L^2_M&: = \{v:(0,1)\to\R\, : \, v \text{ measurable and s.t. } \int_0^1 t^{M-1} v^2 dt < +\infty\} ,
  \\
  H^1_M& : = \{v\in L^2_M \, : \, \text{ $v$ has a first order weak derivative $v'$ in }L^2_M \},
  \\
 H^1_{0,M} &: = \left\{ v\in H^1_M \, : \, v(1)=0\right\}  .
\end{align*}
The Lebesgue space $  L^2_M$ is a Hilbert space endowed with the scalar product $\langle v,w\rangle_M = \int_0^1 t^{M-1} v \, w \, dt ,$
	which yields the orthogonality condition
	\[ v \perp_M w \, \Longleftrightarrow \, \int_0^1 t^{M-1} v \, w \, dt = 0 .\]
The spaces $H^1_M$ and $H^1_{0,M}$ can be seen as generalizations of the spaces of radial functions  $H^1_{\rad}(B)$ and $H^1_{0,\rad}(B)$ because when $M=N$ is an integer then $H^1_N$ is actually equal to $H^1_{\rad}(B)$ by \cite[Theorem 2.2]{DFetal}. 
Next we say that $w\in  H^1_{0,M}$ is a weak solution to \eqref{ode-sect-2} and \eqref{bc} if
	\begin{equation}\label{lane-emden-radial-weak-sol}
	\int_0^1t^{M-1}w' \varphi' dt=\left(\frac{2}{2+\a}\right)^2\int_0^1 t^{M-1} f(w)  \varphi \ dt
	\end{equation}
	for every $\varphi \in H^1_{0,M}$. 

In the spaces $H^1_{0,M}$ we  generalize the {\em classical} radial eigenvalues of $L_u$
considering the Sturm-Liouville eigenvalue problem associated with the linearization of \eqref{ode-sect-2}, namely, if $w$ is a solution to \eqref{ode-sect-2} we consider
\begin{equation}\label{radial-eigenvalue-problem-M}
\left\{\begin{array}{ll}
-\left(t^{M-1}\psi_i'\right)' -t^{M-1} \left(\frac{2}{2+\a}\right)^2 f'(w)\psi_i =t^{M-1} \nu_i\psi_i & \text{ for } t\in ( 0,1)\\
\psi_i'(0)=0 , \quad \psi_i(1)=0  .
\end{array} \right.
\end{equation}
By weak solution to \eqref{radial-eigenvalue-problem-M} we mean a $\psi_i\in H^1_{0,M}$  such that 
\begin{equation}\label{radial-eigenvalue-weak-sol}
\int_0^1 t^{M-1}\left(\psi_i'\varphi' -  \left(\frac{2}{2+\a}\right)^2 f'(w) \psi_i \varphi\right) dt = \nu_i\int_0^1 t^{M-1}\psi_i \varphi \ dt .
\end{equation}
 for every $\varphi \in H^1_{0,M}$. 
Under assumptions H.1 and H.2 letting 
\begin{equation}\label{forma-quadratica-a-rad}
\mathcal Q_{w} : H^1_{0,M}\to \R, \qquad \mathcal Q_{w}(\psi)=\int_0^1 t^{M-1}\left(|\psi'|^2-\left(\frac{2}{2+\a}\right)^2 f'(w)\psi ^2\right) dt
\end{equation}
these eigenvalues $\nu_i$ can be defined using their min-max characterization, 
\[
\nu_1 :=\min_{\substack{\psi \in  H^1_{0,M}\\ w\neq 0}}
 \frac{ \mathcal{Q}_{w}(\psi)}{\int_0^1 t^{M-1} \psi^2(t)\, dt}  ,
\]
and for $i\ge 2$
	\begin{equation}\label{Rayleigh-rad-M}
\nu_i :=\min_{{\substack{\psi\in  H^1_{0,M}\\ \psi\neq 0\\ \psi\perp_{M} \{ \psi_1,\dots,\psi_{i-1}\} }}} \frac{ \mathcal Q_{w}(\psi)}{\int_0^1 t^{M-1}\psi^2(t)\, dt} 
=\min_{\substack{W\subset H^1_{0,M} \\{\mathrm{dim}} W=i}} \max_{\substack{\psi\in W \\ \psi\neq 0}}\frac{ \mathcal Q_{w}(\psi)}{\int_0^1 t^{M-1}\psi^2(t)\, dt}  .
\end{equation}
where $\psi_j$ is an eigenfunction corresponding to $\nu_j$ for $j=1,\dots, i-1$.

\

Finally, for any $M\ge 2$ 
we define the weighted Lebesgue and Sobolev spaces
\begin{align*}
{\mathcal L}_M &  :=\{ v: (0,1)\to \R \, : \, v  \text{ measurable and s.t } \int_0^1 t^{M-3}w^2\, dt<\infty\} ,
\\
\mathcal{H}_M & :=H^1_M\cap {\mathcal L}_M , \quad 
 \mathcal{H}_{0,M} :=H^1_{0,M}\cap {\mathcal L}_M .
 \end{align*}
$\mathcal L_M$ is an Hilbert space 
with the scalar product $\int_0^1 t^{M-3}\eta\varphi\ dt$, so that 
\begin{equation}\label{scalar-HM}
\eta\underline \perp_{M}\varphi\  \ \Longleftrightarrow   \int_0^1 t^{M-3}\eta\varphi\ dt=0  \  \ \text{ for }\eta, \varphi\in {\mathcal L}_M.
\end{equation}
Using these spaces we generalize the radial singular eigenvalues $\widehat \L_i^\rad$ looking at the singular Sturm-Liouville problem 
	\begin{equation}\label{radial-singular-problem-M}
	\left\{\begin{array}{ll}
	- \left(t^{M-1} \psi'\right)'- t^{M-1} \left(\frac{2}{2+\a}\right)^2 f'(w)\, \psi = t^{M-3} \widehat{\nu}_i  \psi & \text{ for } t\in(0,1)\\
	\psi\in  \mathcal H_{0,M}
	\end{array} \right.
\end{equation} 
	with $\widehat{\nu}_i\in \R$.  
	A weak solution to \eqref{radial-singular-problem-M} is $\psi\in \mathcal{H}_{0,M}$ such that 
	\begin{equation}\label{weak-radial-general} 
	\int_0^1 t^{M-1}\left( \psi_i'\varphi' -   \left(\frac{2}{2+\a}\right)^2 f'(w)\,\psi_i \varphi\right) dt=\widehat{\nu}_i\int_0^1 t^{M-3} \psi_i\varphi\, dt \end{equation}
	for any $\varphi\in \mathcal{H}_{0,M}$.
We say that $\widehat\nu_i$ is a singular eigenvalue if there exists $\psi_i\in \mathcal{H}_{0,M}\setminus\{0\}$ that satisfies \eqref{weak-radial-general}. Such $\psi_i$ will be called singular eigenfunction.
If $M=N$ is an integer then $\mathcal{H}_{0,M}=\mathcal{H}_{0,\rad}$ and $\widehat{\nu}_i=\widehat \L^{\rad}_i$ are the radial singular eigenvalues according to the previous definition. The eigenvalues $\widehat \nu_i$ 
can be defined letting 
\[
\widehat{\nu}_1:=\inf_{\substack{\psi\in\mathcal{H}_{0,M}\ \psi\neq 0}} \frac{ \mathcal Q_{w}(\psi)}
{\int_0^1 t^{M-3}\psi ^2\, dt},  
\]
This first eigenvalue $\widehat \nu_1$ is attained when $\widehat \nu_1<\left(\frac{M-2}2\right)^2$ at a function $\psi_1\in \mathcal H_{0,M}$ which is a weak solution to \eqref{radial-singular-problem-M}. Iterating, when $\widehat \nu_{i-1}<\left(\frac{M-2}2\right)^2$ and it is attained at a function $\psi_{i-1}\in \mathcal H_{0,M}$ we can define 
\begin{equation}\label{radial-singular-M}
  \widehat{\nu}_{i}:=\inf_{\substack{\psi\in\mathcal{H}_{0,M}\ \psi\neq 0\\ \psi\underline \perp_{M}\{\psi_1,\dots,\psi_{i-1}\}}}
  \frac{ \mathcal Q_{w}(\psi)  }{\int_0^1 t^{M-3}w^2\, dt}
\end{equation}
where the orthogonality stands for the orthogonality in $\mathcal L_M$. Again $  \widehat{\nu}_{i}$ is attained as far as $\widehat \nu_i <\left(\frac{M-2}2\right)^2$.\
The definitions, the properties of the eigenfunctions $\psi_i$ their behavior at $t=0$ and many other facts that we need in the following have been tackled in \cite{AG-part1}. Here we report only some properties of particular interest. The first one is called {\em Property 5} in \cite{AG-part1} and we recall it in a form that can be adapted both to the singular and the {\em classical} eigenvalues. 

\

\noindent {\bf{Property 5.}} {\em{Each singular eigenvalue $\widehat\nu_i$ (each eigenvalue $\nu_i$)
is simple and any $i$-th eigenfunction has exactly $i$ nodal domains.}}

\begin{proposition}[Proposition 3.11 in \cite{AG-part1}]\label{prop-prel-2}
	The number of negative eigenvalues $\nu_i$ defined in \eqref{Rayleigh-rad-M}  coincides with the number of negative eigenvalues $ \widehat \nu_{i}$  defined in \eqref{radial-singular-M}. 
\end{proposition}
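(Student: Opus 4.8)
The plan is to read both eigenvalue counts as the Morse index of the single quadratic form $\mathcal{Q}_w$ of \eqref{forma-quadratica-a-rad}, and then to reduce the whole statement to the fact that passing from $H^1_{0,M}$ to the smaller space $\mathcal{H}_{0,M}$ does not change that Morse index. The decisive elementary observation is that both Rayleigh quotients in \eqref{Rayleigh-rad-M} and \eqref{radial-singular-M} carry the \emph{same} numerator $\mathcal{Q}_w(\psi)$ and a \emph{strictly positive} denominator ($\int_0^1 t^{M-1}\psi^2\,dt$ and $\int_0^1 t^{M-3}\psi^2\,dt$ respectively), so the sign of each quotient equals the sign of $\mathcal{Q}_w(\psi)$ and the count is insensitive to the weight chosen in the denominator. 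Hence, by the min--max formula \eqref{Rayleigh-rad-M}, the number of negative $\nu_i$ equals
\[
 m_1 := \max\Big\{\dim V \ :\ V\subseteq H^1_{0,M},\ \mathcal{Q}_w(\psi)<0 \ \text{for all } \psi\in V\setminus\{0\}\Big\},
\]
the Morse index of $\mathcal{Q}_w$ on $H^1_{0,M}$; and I would check, relying on the spectral picture of \cite{AG-part1}, that the number of negative singular eigenvalues $\widehat\nu_i$ equals
\[
 m_2 := \max\Big\{\dim V \ :\ V\subseteq \mathcal{H}_{0,M},\ \mathcal{Q}_w(\psi)<0 \ \text{for all } \psi\in V\setminus\{0\}\Big\}.
\]
For the latter I use that each negative singular eigenvalue lies below the threshold $\left(\frac{M-2}2\right)^2\ge 0$, hence by \eqref{radial-singular-M} is attained at an eigenfunction in $\mathcal{H}_{0,M}$; the corresponding eigenfunctions are mutually $\underline\perp_M$--orthogonal, and on their span $\mathcal{Q}_w$ is negative definite (so $m_2$ is at least their number), while the reverse inequality is the Courant--Fischer direction established in \cite{AG-part1}. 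Since $f'(w)\in L^\infty$ by H.1--H.2, the form $\mathcal{Q}_w$ is $H^1_M$--continuous and its negative index is finite, so $m_1,m_2<\infty$.

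Having rephrased everything through $\mathcal{Q}_w$, the inequality $m_2\le m_1$ is immediate from the inclusion $\mathcal{H}_{0,M}\subseteq H^1_{0,M}$. The reverse inequality $m_1\le m_2$ is the real content, and here the plan is a density-plus-continuity argument: choose a subspace $V=\mathrm{span}(e_1,\dots,e_{m_1})\subseteq H^1_{0,M}$ realizing $m_1$, so that by compactness of its unit sphere $\mathcal{Q}_w\le -2\delta<0$ on $V\setminus\{0\}$; if $\mathcal{H}_{0,M}$ is dense in $H^1_{0,M}$ one may replace each $e_k$ by a nearby $\widetilde e_k\in\mathcal{H}_{0,M}$, and for a close enough choice the $\widetilde e_k$ remain independent and, by continuity of $\mathcal{Q}_w$, the form stays negative definite on $\widetilde V=\mathrm{span}(\widetilde e_1,\dots,\widetilde e_{m_1})\subseteq \mathcal{H}_{0,M}$, giving $m_2\ge m_1$.

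Thus the whole proposition rests on the density of $\mathcal{H}_{0,M}=H^1_{0,M}\cap\mathcal{L}_M$ in $H^1_{0,M}$, which is the step I expect to be delicate and which splits according to the value of $M=\tfrac{2(N+\alpha)}{2+\alpha}\in[2,N]$ recalled in \eqref{Malpha}. When $M>2$ there is nothing to prove: the weighted Hardy inequality $\int_0^1 t^{M-3}\psi^2\,dt\le \left(\frac{2}{M-2}\right)^2\int_0^1 t^{M-1}(\psi')^2\,dt$, valid for $\psi(1)=0$, yields $H^1_{0,M}\subseteq \mathcal{L}_M$, hence $\mathcal{H}_{0,M}=H^1_{0,M}$ and $m_1=m_2$ trivially. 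The borderline case $M=2$ (that is $N=2$) is the genuine obstacle, since then Hardy fails and $\mathcal{H}_{0,2}\subsetneq H^1_{0,2}$, because a function with $\psi(0)\neq 0$ gives $\int_0^1 t^{-1}\psi^2=\infty$. Here I would use that the origin has zero capacity in this two-dimensional weighting: after truncating $\psi$ in value (which reduces to a bounded $\psi$, densely), I cut off near $0$ with the logarithmic functions $\chi_n$ equal to $0$ on $(0,n^{-2})$, to $1$ on $(n^{-1},1)$, and to $\log(t\,n^2)/\log n$ on $(n^{-2},n^{-1})$, for which $\int_0^1 t\,(\chi_n')^2\,dt=\tfrac1{\log n}\to 0$; then $\chi_n\psi\in\mathcal{H}_{0,2}$ and $\chi_n\psi\to\psi$ in $H^1_2$, which is the required density. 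This removability near $t=0$ is already part of the analysis in \cite{AG-part1} and may be quoted instead. Combining the two cases gives $m_1=m_2$, i.e.\ the two counts agree, which is the assertion of the proposition.
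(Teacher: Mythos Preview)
The present paper does not prove this proposition; it is merely quoted from \cite{AG-part1}, so there is no in-text argument to compare against. Your proof is nonetheless correct and is the natural route: recasting both counts as the Morse index of the single quadratic form $\mathcal{Q}_w$ on the nested spaces $\mathcal{H}_{0,M}\subseteq H^1_{0,M}$, and then showing that this index is unchanged by the inclusion. The dichotomy you draw is exactly right: for $M>2$ the weighted Hardy inequality gives $\mathcal{H}_{0,M}=H^1_{0,M}$ outright (note that the boundary term at $t=\epsilon$ in the integration by parts has the favourable sign, so the inequality does hold for functions with $\psi(1)=0$ only), while for $M=2$ the logarithmic cut-off is the standard way to see that the origin has zero $H^1_2$-capacity, whence $\mathcal{H}_{0,2}$ is dense in $H^1_{0,2}$ and continuity of $\mathcal{Q}_w$ in $H^1_M$ transports any negative subspace.

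One point that deserves to be stated explicitly: identifying the number of negative $\widehat\nu_i$ with $m_2$ requires the Courant--Fischer (min--max over subspaces) characterization of the singular eigenvalues, not just the iterative definition \eqref{radial-singular-M} given here. You are right that this is part of the spectral analysis in \cite{AG-part1}; the inductive argument you sketch (finding, in any $k$-dimensional negative subspace of $\mathcal{H}_{0,M}$, a vector $\underline\perp_M$-orthogonal to the first $k-1$ eigenfunctions, forcing $\widehat\nu_k<0$) is precisely how one bridges the two formulations below the threshold $\big(\tfrac{M-2}{2}\big)^2$. With that made explicit, the argument is complete.
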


\
	Eventually we go back to problem \eqref{general-f-H}: if $u$ is a radial solution and $w$ is defined as in \eqref{transformation-henon}, we can compute the Morse index of $u$ in terms of the singular eigenvalues $\widehat \nu_i$ of \eqref{radial-singular-problem-M} with $M$ given by \eqref{Malpha}.
	
\begin{proposition}[Proposition 1.4 in \cite{AG-part1}]\label{general-morse-formula-H}
Assume that $\a\geq 0$ and $f$ satisfies $H.1$ and take $u$ a radial weak solution to \eqref{general-f-H} satisfying $H.2$. Then its radial Morse index $m_{\rad}$ is the number of negative eigenvalues of \eqref{radial-singular-problem-M}, and its 
Morse index is given by
\begin{align}\label{tag-2-H}\begin{split}
m(u) = & \sum\limits_{i=1}^{m_{\rad}}\sum\limits_{j=0}^{\lceil J_i -1\rceil } N_j, \quad \qquad \mbox{where} \\
J_i= & \frac{2+\a}{2} \left(\sqrt{\left(\frac{M-2}{2}\right)^2- \widehat{\nu}_i}-\frac{M-2}{2}\right).
\end{split}\end{align}
Furthermore the negative singular eigenvalues are $\widehat\L = \left(\frac{2+\a}{2}\right)^2\widehat\nu_i +\lambda_j$ 
and the related eigenfunctions are, in spherical coordinates,
\begin{equation}\label{decomp-autofunz-f(u)-H}
\psi(x)= \phi_i\big( r^{\frac {2+\a}2}\big) Y_j(\theta), 
\end{equation} 
where $ \phi_i$ is an eigenfunction for \eqref{radial-singular-M}related to $\widehat\nu_i$.	
\end{proposition}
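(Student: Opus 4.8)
The plan is to deduce this statement from Proposition \ref{general-morse-formula} by pushing the change of variables \eqref{transformation-henon} down to the level of the radial singular eigenvalue problem. Proposition \ref{general-morse-formula} already supplies, for the solution $u$ itself, the separation-of-variables decomposition of the negative singular eigenvalues as $\widehat\L = \widehat\L^{\rad}_i + \l_j$ together with the Morse index formula \eqref{tag-2}, both expressed through the \emph{radial} singular eigenvalues $\widehat\L^{\rad}_i$ of the PDE (relative to the true dimension $N$). Thus the only thing left to prove is that these radial quantities are governed by the one-dimensional problem \eqref{radial-singular-problem-M}: concretely, that $\widehat\L^{\rad}_i = \left(\frac{2+\a}{2}\right)^2\widehat\nu_i$ for every $i$, with matching eigenfunctions. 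Once this identity is secured, inserting it into \eqref{tag-2} and \eqref{decomp-autofunz-f(u)} yields \eqref{tag-2-H} and \eqref{decomp-autofunz-f(u)-H}.

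The core computation is the transformation of the radial Rayleigh quotient. Writing $\phi(t)=\varphi(r)$ with $t=r^{\frac{2+\a}{2}}$, I would use the elementary identities
\[
M-2=\frac{2(N-2)}{2+\a}, \qquad r^{N-1+\a}\,dr=\frac{2}{2+\a}\,t^{M-1}\,dt, \qquad r^{N-3}\,dr=\frac{2}{2+\a}\,t^{M-3}\,dt,
\]
which follow directly from \eqref{Malpha}. A direct substitution then shows that $\int_0^1|\varphi'|^2 r^{N-1}\,dr$, $\int_0^1 r^\a f'(u)\varphi^2 r^{N-1}\,dr$ and $\int_0^1 r^{N-3}\varphi^2\,dr$ become, up to the common positive constant $\frac{2}{2+\a}$, the three weighted integrals appearing in $\mathcal Q_w(\phi)$ and in the singular norm $\int_0^1 t^{M-3}\phi^2\,dt$, with the Dirichlet term carrying an extra factor $\left(\frac{2+\a}{2}\right)^2$. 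Consequently the radial singular Rayleigh quotient of the PDE equals exactly $\left(\frac{2+\a}{2}\right)^2$ times the quotient defining $\widehat\nu_i$ in \eqref{radial-singular-M}, which is the origin of the identity $\widehat\L^{\rad}_i=\left(\frac{2+\a}{2}\right)^2\widehat\nu_i$; since the constant is positive, signs, and hence the count of negative eigenvalues, are preserved.

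The step that requires genuine care — and which I expect to be the main obstacle — is showing that $\varphi\mapsto\phi$ is a linear isomorphism of $\mathcal H_{0,\rad}(B)$ onto $\mathcal H_{0,M}$ that simultaneously respects the $\mathcal L$-inner product (weight $|x|^{-2}$) and the $\mathcal L_M$-inner product (weight $t^{M-3}$), up to the same positive constant. Only then does the whole iterative construction of the eigenvalues transfer faithfully: the orthogonality constraints $\psi\,\underline{\perp}\,\varphi_1^{\rad},\dots$ map to the constraints $\psi\,\underline{\perp}_M\,\psi_1,\dots$ of \eqref{radial-singular-M}, so the eigenvalues correspond in order and with their simplicity (Property 5), and the attainment thresholds match because $\widehat\L^{\rad}_i<\left(\frac{N-2}{2}\right)^2$ is equivalent to $\widehat\nu_i<\left(\frac{M-2}{2}\right)^2$. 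The delicate points are the behaviour of admissible functions at the singular endpoint $r=0$, where the change of variables is itself singular, and the correct characterization of the weighted spaces; for integer $M=N$ this is anchored by the identification $H^1_M=H^1_\rad(B)$ quoted from \cite[Theorem 2.2]{DFetal}, and for general $M$ one must work intrinsically with the weighted spaces.

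Finally I would close the argument by a short algebraic check: inserting $\widehat\L^{\rad}_i=\left(\frac{2+\a}{2}\right)^2\widehat\nu_i$ into the expression for $J_i$ in \eqref{tag-2} and using $\frac{2+\a}{2}\cdot\frac{M-2}{2}=\frac{N-2}{2}$ turns it into the formula displayed in \eqref{tag-2-H}, while the relation $\widehat\L=\widehat\L^{\rad}_i+\l_j$ becomes $\widehat\L=\left(\frac{2+\a}{2}\right)^2\widehat\nu_i+\l_j$. Since the corresponding PDE eigenfunctions factor as $\widehat\psi_i^{\rad}(r)Y_j(\theta)$ by \eqref{decomp-autofunz-f(u)} and the radial factor is $\widehat\psi_i^{\rad}(r)=\phi_i\big(r^{\frac{2+\a}{2}}\big)$ through the transformation, the eigenfunction decomposition \eqref{decomp-autofunz-f(u)-H} follows at once, completing the proof.
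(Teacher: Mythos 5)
Your argument is correct and coincides with the route the paper itself relies on: the paper does not reprove this statement but imports it from \cite[Proposition 1.4]{AG-part1}, and the mechanism indicated there is exactly the one you propose --- transfer the radial singular Rayleigh quotient under \eqref{transformation-henon} to obtain $\widehat\L^{\rad}_i=\left(\frac{2+\a}{2}\right)^2\widehat\nu_i$ with matching eigenfunctions, orthogonality and attainment thresholds, then feed this into the spherical-harmonic decomposition of Proposition \ref{general-morse-formula}. Your computations check out (in particular $r^{N-1+\a}\,dr=\frac{2}{2+\a}\,t^{M-1}\,dt$, $r^{N-3}\,dr=\frac{2}{2+\a}\,t^{M-3}\,dt$, and $\frac{N-2}{2}=\frac{2+\a}{2}\cdot\frac{M-2}{2}$, which turn \eqref{tag-2} into \eqref{tag-2-H}), and you correctly single out the one genuinely delicate point, namely that $\varphi\mapsto\phi$ is an isomorphism of $\mathcal{H}_{0,\rad}(B)$ onto $\mathcal{H}_{0,M}$ compatible with both weighted inner products.
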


To characterize degeneracy, and in particular radial degeneracy, also the {\it classical} eigenvalues $\nu_i$ of \eqref{radial-singular-problem-M}, again with $M$ given by \eqref{Malpha}, are needed.

\begin{proposition}[Proposition 1.5 in \cite{AG-part1}]\label{non-degeneracy-H}
Assume that $\a\geq 0$ and $f$ satisfies $H.1$ and take $u$ a radial weak solution to \eqref{general-f-H} satisfying $H.2$.	When $N\ge 3$ then $u$ is radially degenerate if and only if $\widehat{\nu}_k = {\nu}_k =  0$  for some $k\ge 1$, 
and degenerate if and only if, in addition,
\begin{equation}\label{non-radial-degeneracy-H}
\widehat{\nu}_k =  - \left(\frac{2}{2+\a}\right)^2 j (N-2+j) \qquad \mbox{ for some $k, j\ge 1$.}
\end{equation}
Otherwise if $N=2$ then $u$ is radially degenerate if and only if $\nu_k=0$ for some $k\ge 1$, 
and degenerate if and only if, in addition, \eqref{non-radial-degeneracy-H} holds. \\
Besides in any dimension $N\ge 2$,  any nonradial function in the kernel of $L_u$ has the form \eqref{decomp-autofunz-f(u)-H}.
\end{proposition}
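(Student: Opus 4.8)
The plan is to reduce every degeneracy question to the one-dimensional singular Sturm--Liouville problem \eqref{radial-singular-problem-M} by combining a spherical-harmonic decomposition of $\ker L_u$ with the change of variable \eqref{transformation-henon}. Since $u$ is radial, the potential $|x|^\a f'(u)$ is radial and $L_u$ commutes with the action of $O(N)$; hence $L_u$ preserves each eigenspace of $-\Delta_{S^{N-1}}$, and any $\psi\in\ker L_u$ splits as $\psi(x)=\sum_{j\ge0}g_j(r)Y_j(\theta)$ with $L_u\big(g_jY_j\big)=0$ for every $j$. This already gives the last assertion of the statement: a nonradial element of the kernel has a nonzero component with $j\ge1$, and after the substitution $t=r^{(2+\a)/2}$, $\phi_j(t)=g_j(r)$, that component takes exactly the form \eqref{decomp-autofunz-f(u)-H}.

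Next I would identify the scalar equation solved by each component. Writing $-\Delta(g_jY_j)=\big(-g_j''-\tfrac{N-1}{r}g_j'+\tfrac{\lambda_j}{r^2}g_j\big)Y_j$ shows that $L_u(g_jY_j)=0$ is precisely the singular eigenvalue equation \eqref{eq:autovalori-singolari} with $\widehat\L=0$, read on the $j$-th harmonic. By the correspondence recorded in Proposition~\ref{general-morse-formula-H}, namely $\widehat\L=\big(\tfrac{2+\a}{2}\big)^2\widehat\nu+\lambda_j$, lying in the kernel ($\widehat\L=0$) forces
\[
\widehat\nu=-\Big(\tfrac{2}{2+\a}\Big)^2\lambda_j=-\Big(\tfrac{2}{2+\a}\Big)^2 j(N-2+j).
\]
For $j=0$ this reads $\widehat\nu=0$, while for $j\ge1$ it is exactly condition \eqref{non-radial-degeneracy-H}. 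In either case the transformed component $\phi_j$ has to be a bona fide eigenfunction of \eqref{radial-singular-problem-M}, that is, it must belong to $\mathcal H_{0,M}$, and the whole subtlety lies in deciding when this membership holds.

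Here the value $M=M(N,\a)$, which equals $2$ if and only if $N=2$, is decisive. The indicial analysis of the $j$-th radial equation at the origin singles out the regular behaviour $g_j(r)\sim r^{j}$ (its companion solution being singular at the origin, hence inadmissible), so that $\phi_j(t)\sim t^{2j/(2+\a)}$. For $j\ge1$ one checks that $\int_0^1 t^{M-3}\phi_j^2\,dt<\infty$ for every $N\ge2$, so nonradial kernel elements are always captured by the singular eigenvalue $\widehat\nu$, which is why \eqref{non-radial-degeneracy-H} holds uniformly in the dimension. For $j=0$ the radial component solves $-g_0''-\tfrac{N-1}{r}g_0'-r^\a f'(u)g_0=0$ with $g_0'(0)=0$, whence ODE uniqueness forces $g_0(0)\ne0$ as soon as $g_0\not\equiv0$. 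When $N\ge3$ ($M>2$) this bounded function still satisfies $\int_0^1 t^{M-3}\phi_0^2\,dt<\infty$, so $\phi_0\in\mathcal H_{0,M}$; since the equations for $\nu=0$ and for $\widehat\nu=0$ coincide (both have vanishing right-hand side), $\phi_0$ solves the classical and the singular problem simultaneously, and matching the two indices through the simplicity and nodal count of Property~5 together with Proposition~\ref{prop-prel-2} yields the equivalence $\nu_k=\widehat\nu_k=0$. When $N=2$ ($M=2$), instead, $\int_0^1 t^{-1}\phi_0^2\,dt=\infty$ because $\phi_0(0)=g_0(0)\ne0$, so $\phi_0\notin\mathcal H_{0,M}$: the radial kernel escapes the singular problem altogether and is seen only through the classical eigenvalue, explaining why radial degeneracy in the plane is characterised by $\nu_k=0$ alone.

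Putting the two cases together, $u$ is radially degenerate exactly when the $j=0$ alternative occurs, and degenerate exactly when some $j\ge0$ alternative occurs, which is the assertion of the proposition. I expect the main obstacle to be precisely this bookkeeping of the function spaces near the origin: one has to prove rigorously that the regular radial solution does not vanish at $0$ (so that it drops out of $\mathcal L_M$ when $M=2$) while every higher harmonic decays like $r^{j}$ (so that it stays in $\mathcal L_M$), and one has to check that the index $k$ is preserved in passing between the classical problem \eqref{radial-eigenvalue-problem-M} and the singular problem \eqref{radial-singular-problem-M}; for this last point the simplicity and nodal-domain count of Property~5, combined with the spectral-counting equality of Proposition~\ref{prop-prel-2}, are the decisive ingredients.
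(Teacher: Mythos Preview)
The present paper does not contain a proof of this proposition: it is quoted from the companion article \cite{AG-part1} (as Proposition~1.5 there), so there is no in-paper argument to compare against. Your outline is correct and is essentially the standard route that the companion paper follows---separate variables via spherical harmonics, transfer each radial component through the change of variable \eqref{transformation-henon} to the one-dimensional problems \eqref{radial-eigenvalue-problem-M}--\eqref{radial-singular-problem-M}, and distinguish $N=2$ from $N\ge3$ by checking whether the bounded radial kernel element (nonzero at the origin) lies in ${\mathcal L}_M$; the index matching via Property~5 and Proposition~\ref{prop-prel-2} is exactly the right tool.
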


\section{Morse index of radial solutions }
\label{se:4}
In this section we address to the Morse index of radial solutions to the semilinear problem \eqref{general-f-H} %and, in order to simplify the topic, we focus here on the case 
when $\Omega$ is the unit ball, namely
% since the case when $\Omega$ is an annulus can be easily deduced from this one. Then let us consider the H\'enon type nonlinearities
\begin{equation}\label{general-f-H-5}
\left\{\begin{array}{ll}
-\Delta u = |x|^{\alpha}f(u) \qquad & \text{ in } B, \\
u= 0 & \text{ on } \partial B,
\end{array} \right.\end{equation}
where $\a\ge 0$ is a real parameter and $f$ satisfies  H.1. The case of $\a=0$  gives back the autonomous problem \eqref{general-f-auto} in $B$ and will be treat together with the general case. \\ 
As recalled in Section \ref{sec:2} any radial solution $u$ to \eqref{general-f-H-5} is linked by the transformation \eqref{transformation-henon} to a solution $w$ to \eqref{ode-sect-2} and \eqref{bc} with $M\geq 2$ given by \eqref{Malpha}.

To prove Theorem \ref{morse-estimate-H} we need some qualitative properties of solutions to semilinear O.D.E \eqref{ode-sect-2}.
 Let us denote by $0<t_1<\dots<t_m=1$ the zeros of $w$ in $[0,1]$, so that $w(t_i)=0$ and,   assuming $w(0)>0$ we let
\begin{align*}
{\mathcal M}_0 & = \sup \{ w(t) \ : \ 0<t<t_1 \}, \\
{\mathcal M}_i & = \max \{ |w(t)|\, : \, t_{i} \le r \le t_{i+1} \} ,
\end{align*}
for $i=1,\dots,m-1$. Then we have:
\begin{lemma}\label{lemma-max-assoluti}
 Assume that $\a\geq 0$ and  $f$ satisfies H.1 and let $w$ be a weak solution to \eqref{ode-sect-2} with $m$ nodal zones which is positive in the first one (starting from $0$) satisfying H.2. If in addition $f$ satisfies    $f(s)/s>0$ as $s\neq 0$, 
   then	 $w$ is strictly decreasing in its first nodal zone so that 
	\[ w(0)= {\mathcal M}_0 .\] 
Moreover it has a unique critical point $s_i$ in the nodal set $(t_{i}, t_{i+1})$ for $i=1,\dots m-1$  with 
	\[{\mathcal M}_0 > {\mathcal M}_2 > \dots \]
        \[{\mathcal M}_1 > {\mathcal M}_3 > \dots .\]
	In particular $0$ is the global maximum point and $s_1$ is the global minimum point. \\
	If, in addition $g$ is odd, then
        \[{\mathcal M}_0 > {\mathcal M}_1 > \dots {\mathcal M}_{m-1}.\]
\end{lemma}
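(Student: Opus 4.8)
The plan is to build everything on two ingredients: the \emph{strict monotonicity} of the weighted derivative $t^{M-1}w'$ inside each nodal zone, and the \emph{monotonicity of a one-dimensional energy} in $t$. Set $\kappa:=\left(\frac{2}{2+\a}\right)^2>0$ and $F(s):=\int_0^s f(\s)\,d\s$. Rewriting \eqref{ode-sect-2} in non-divergence form,
\[
w''+\frac{M-1}{t}\,w'+\kappa\, f(w)=0\qquad\text{on }(0,1),
\]
I would first record that, since $f(s)/s>0$ for $s\neq0$, the term $f(w)$ keeps the sign of $w$ and is nonzero on each nodal zone; hence $(t^{M-1}w')'=-\kappa\,t^{M-1}f(w)$ is \emph{strictly} of one sign there, so $t\mapsto t^{M-1}w'(t)$ is strictly monotone on every interval where $w$ has constant sign.

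For the first zone $(0,t_1)$, where $w>0$, the map $t^{M-1}w'$ is strictly decreasing and tends to $0$ as $t\to0^+$ (because $w'(0)=0$ and $t^{M-1}\to0$); thus $t^{M-1}w'<0$, i.e.\ $w'<0$, on all of $(0,t_1)$, which gives the strict decrease and $w(0)=\mathcal M_0$. For a later zone $(t_i,t_{i+1})$, $i\ge1$, I would combine Rolle's theorem (there is at least one interior critical point since $w$ vanishes at both endpoints) with the strict monotonicity of $t^{M-1}w'$ (so its zero is unique) to conclude that there is exactly one critical point $s_i$, at which $w'$ changes sign; hence $s_i$ realizes $\mathcal M_i=|w(s_i)|$ and is the unique extremum in that zone. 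Note this also shows $w'(t_{i+1})\neq0$ at every interior node, a fact I will use below.

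The ordering of the $\mathcal M_i$ is the heart of the argument and comes from the energy $E(t):=\tfrac12(w'(t))^2+\kappa\,F(w(t))$. A direct computation using the non-divergence equation gives $E'(t)=-\frac{M-1}{t}(w'(t))^2\le0$, and $E$ is \emph{strictly} decreasing between consecutive critical points because $w'$ does not vanish in between (including across the intermediate node, by the previous remark). Evaluating at the critical points $0,s_1,\dots,s_{m-1}$, where $w'=0$ so that $E$ equals $\kappa$ times $F$ at the critical value of $w$, the chain $E(0)>E(s_1)>\cdots>E(s_{m-1})$ reads
\[
F(\mathcal M_0)>F(-\mathcal M_1)>F(\mathcal M_2)>F(-\mathcal M_3)>\cdots.
\]
Since $f(s)/s>0$ makes $F$ strictly increasing on $(0,\infty)$ and strictly decreasing on $(-\infty,0)$, reading off the equal-parity subchains yields $\mathcal M_0>\mathcal M_2>\cdots$ and $\mathcal M_1>\mathcal M_3>\cdots$; in particular $\mathcal M_0$ is the largest positive value (so $0$ is the global maximum) and $\mathcal M_1$ the largest negative excursion (so $s_1$ is the global minimum). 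When $f$ is odd, $F$ is even, so $F(-\mathcal M_i)=F(\mathcal M_i)$ and the chain collapses to $F(\mathcal M_0)>F(\mathcal M_1)>\cdots>F(\mathcal M_{m-1})$, whence $\mathcal M_0>\mathcal M_1>\cdots>\mathcal M_{m-1}$ by monotonicity of $F$ on the positive axis.

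I expect the only genuine technical point to be regularity: to run the pointwise ODE computations and to differentiate $E$, I need $w\in C^2(0,1)\cap C^1([0,1))$ with $w'(0)=0$. Under H.1--H.2 the right-hand side $t^{M-1}f(w)$ is continuous, so $t^{M-1}w'\in C^1$ and the interior regularity follows by bootstrapping; the behaviour at $t=0$ (continuity of $w$ and $w'(0)=0$ in the sense of \eqref{bc}) is exactly the kind of statement established for these weak solutions in \cite{AG-part1}, which I would invoke rather than reprove. Everything else is elementary once the strict monotonicity of $t^{M-1}w'$ and the monotonicity of $E$ are in hand.
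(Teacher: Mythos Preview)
Your proof is correct and follows essentially the same approach as the paper: both use the sign of $(t^{M-1}w')'=-\kappa\,t^{M-1}f(w)$ on each nodal zone to control critical points, and the monotonicity of the energy $E(t)=\tfrac12(w')^2+\kappa F(w)$ (which the paper writes in integrated form, as $\tfrac12(w')^2+(M-1)\int \tfrac{(w')^2}{s}\,ds=\kappa(F(w(0))-F(w))$) to compare the critical values through $F$. Your exposition is in fact a bit cleaner---deriving uniqueness of critical points from the strict monotonicity of $t^{M-1}w'$ rather than the second-derivative test, and making explicit how the monotonicity of $F$ on each half-line turns the $F$-chain into the $\mathcal M_i$ orderings---but the underlying argument is the same.
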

\begin{proof}
Under assumptions H.1 and H.2 a weak solution to \eqref{ode-sect-2} and \eqref{bc} is classical by \cite[Corollary 4.8]{AG-part1}. Then integrating \eqref{ode-sect-2} and recalling that  $w>0$ in $(0,t_1)$ gives
	\[
		w'(t) = -\left(\frac{2}{2+\a}\right)^2 {t}^{1-M}
		\int_0^{t}s^{M-1}\frac{f(w)}{w} w  \ ds<0 
              \]
              for any $t\in (0,t_1)$.
	 Then $w$ is strictly decreasing in the first nodal zone, so that ${\mathcal M}_0 =w(0)$. 
	We multiply $-w''-\frac{M-1}t w'=\left(\frac{2}{2+\a}\right)^2 f(w)$ by $w'$ and integrate to compute 
        \begin{equation}\label{passaggio-2}
        \frac 12 \left(w'(t) \right)^2+(M-1)\int_0^t \frac{\left(w'(s) \right)^2}s\, ds=\left(\frac{2}{2+\a}\right)^2\left(F(w(0))-F(w(t))\right)\end{equation}
      where $F(s)=\int^s f(t) dt$ is a primitive of $f$. Since the l.h.s. is strictly positive, it follows that $F(w(0))>F(w(r))$ for any $t\in (0,1]$, meaning that $w(0)\neq w(t)$ for any $t\in (0,1]$. This implies that ${\mathcal M}_0=w(0)>w(t)$ for any $t\in (0,1]$ so that $0$ is the global maximum point of $w$. The very same computation (integrating between $t_i, t$) shows that $|w|$ is strictly increasing in any nodal region until it reaches a critical point $s_i$, and then it is strictly decreasing. At any critical point $s_i$, we have $w(s_i)\neq 0$ by the unique continuation principle and 
          $w''(s_i) = -\left(\frac{2}{2+\a}\right)^2 f(w(s_i))\neq 0$  has the same sign of $w(s_i)$ because $f(s)/s>0$, so that $w$ can have only one strict maximum point (resp. minimum) in each nodal set where it is positive (resp. negative). Further the previous argument also shows that ${\mathcal M}_0 > {\mathcal M}_{2} > \dots$ and that ${\mathcal M}_1 > {\mathcal M}_{3} > \dots$. If, in addition $g$ is odd, then $G$ is even and \eqref{passaggio-2} shows that $F(w(0))>F(|w(t)|)$ for any $t\in (0,1]$ from which it follows that ${\mathcal M}_0 > {\mathcal M}_1 > \dots {\mathcal M}_{m-1}.$    
\end{proof}

Next we show an estimate on $u'$ and $w'$ that will be useful in the following.
\begin{lemma}\label{lem:w'H1}
 Assume  that $\a\geq 0$ and $f$ satisfies H.1, take $u$ a radial weak solution to \eqref{general-f-H-5} satisfying H.2
and $w$ as in \eqref{transformation-henon}. Then $u'\in \mathcal H_N$ and $w'\in \mathcal H_M$. 
\end{lemma}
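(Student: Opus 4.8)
The plan is to localize the whole question at the singular endpoint of the interval, since on any $[\delta,1]$ with $\delta>0$ the functions $u'$, $w'$ are continuous and the weights $t^{M-1},t^{M-3}$ (resp.\ $r^{N-1},r^{N-3}$) are bounded, so the defining integrals of $\mathcal H_M$ and $\mathcal H_N$ can only diverge near the origin. By \cite[Corollary 4.8]{AG-part1} the solution $w$ (hence also $u$) is classical, in particular continuous on the compact interval $[0,1]$ and therefore bounded; since H.1 makes $f$ continuous, $f(w)$ and $f(u)$ are bounded, say by a constant $C$.

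The decisive step is an estimate on the decay of the derivatives at the origin. Integrating \eqref{ode-sect-2} from $0$ to $t$ and using $w'(0)=0$ gives
\[
t^{M-1}w'(t)=-\left(\frac{2}{2+\a}\right)^2\int_0^t s^{M-1}f(w(s))\,ds ,
\]
so that $|w'(t)|\le \frac{C}{M}\left(\frac{2}{2+\a}\right)^2 t$, i.e.\ $w'(t)=O(t)$ as $t\to 0$. Writing the equation as $w''=-\frac{M-1}{t}w'-\left(\frac{2}{2+\a}\right)^2 f(w)$ and inserting this bound shows $w''=O(1)$ near $0$. The identical computation on the radial equation $-u''-\frac{N-1}{r}u'=r^{\a}f(u)$ yields $r^{N-1}u'(r)=-\int_0^r s^{N-1+\a}f(u(s))\,ds$, hence $u'(r)=O(r^{1+\a})$ and, in turn, $u''(r)=O(r^{\a})$ as $r\to 0$.

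With these rates the three conditions defining $w'\in\mathcal H_M=H^1_M\cap\mathcal L_M$ are immediate: near $t=0$ one has $t^{M-1}(w')^2=O(t^{M+1})$, $t^{M-1}(w'')^2=O(t^{M-1})$ and $t^{M-3}(w')^2=O(t^{M-1})$, and all the exponents exceed $-1$ because $M\ge 2$. Likewise $r^{N-1}(u')^2=O(r^{N+1+2\a})$, $r^{N-1}(u'')^2=O(r^{N-1+2\a})$ and $r^{N-3}(u')^2=O(r^{N-1+2\a})$, with $N-1+2\a>-1$, so $u'\in\mathcal H_N$. (Consistently, the change of variables \eqref{transformation-henon}, for which $\frac{2+\a}{2}(M-1)=N-1+\frac{\a}{2}$ by \eqref{Malpha}, turns $\int_0^1 t^{M-1}(w')^2\,dt$ and $\int_0^1 t^{M-3}(w')^2\,dt$ exactly into $\frac{2}{2+\a}\int_0^1 r^{N-1}(u')^2\,dr$ and $\frac{2}{2+\a}\int_0^1 r^{N-3}(u')^2\,dr$, so the two membership statements are equivalent.)

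The only real difficulty is the control at the singular origin, and it is entirely concentrated in the estimate $w'(t)=O(t)$ (equivalently $u'(r)=O(r^{1+\a})$); this single decay rate is what forces convergence of the most delicate integral, the one carrying the strong weight $t^{M-3}$. Once it is in hand the rest is the routine verification that the resulting powers of $t$ and $r$ are integrable, which holds for every $M\in[2,N]$ and every $\a\ge 0$.
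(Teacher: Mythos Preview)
Your proof is correct and follows essentially the same strategy as the paper: use the known $C^2$ regularity of the solution to control $u'$ (resp.\ $w'$) near the origin, obtaining $u'(r)=O(r^{1+\a})$ and $w'(t)=O(t)$, and then check that the weighted integrals defining $\mathcal H_N$ and $\mathcal H_M$ converge. The only cosmetic differences are that the paper obtains the decay of $u'$ via de l'H\^opital rather than by direct integration of the ODE, and treats $u'\in\mathcal H_N$ first, deferring $w'\in\mathcal H_M$ to the change-of-variables identities in \cite{AG-part1}; you instead carry out both cases explicitly and note their equivalence, which is a minor but pleasant self-contained touch.
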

\begin{proof}
We prove that $u'\in \mathcal H_N$. The fact that $w'\in \mathcal H_M$ then follows by Lemma 4.4 and (4.21) in \cite{AG-part1}. By \cite[Lemma 4.6]{AG-part1} it is known that any weak solution $u\in C^2[0,1]$ and solves \eqref{ode-sect-2} in classical sense. In particular  $u''\in C[0,1]$ so that $\int_0^1 r^{N-1} |u''|^2 dr <\infty$.
	Moreover for every $\gamma<1+\a$ de L'Hopital Theorem gives
\[\lim_{r\to 0}\frac{u'(r)}{r^\gamma}=\lim_{r\to 0}\frac{r^{N-1}u'(r)}{r^{N-1+\gamma}}=\frac{-r^{1+\a-\gamma}f(u(r))}{N-1+\gamma}=0
\]
which shows that $\int_0^1 r^{N-3}|u'|^2\ dr<\infty$ and concludes the proof.
\end{proof}

The transformation \eqref{transformation-henon} is useful also in computing the Morse index of radial solutions $u$ to \eqref{general-f-H-5} via Proposition \ref{general-morse-formula-H}. In that case we look at  the singular eigenvalues $\widehat \nu_i$  defined in \eqref{radial-singular-problem-M} in Section \ref{sec:2}.
Next Proposition establishes some bounds for these singular eigenvalues $\widehat \nu_i$  which are essential to prove Theorem \ref{morse-estimate-H}.
  \begin{proposition}\label{stima-nu-H}
  Assume   that $\a\geq 0$ and $f$ satisfies H.1 and take $u$ a radial weak solution to \eqref{general-f-H-5} with $m$ nodal zones satisfying H.2.
	Then
	\begin{align}
	\label{nl<k-general-H} & \widehat{\nu}_i  < -(M-1)  \quad \text{ as } i=1,\dots m-1 .
	\intertext{If, in addition, $f(s)/s >0$ when $s\neq 0$  and the radial Morse index of $u$ is $m_\rad(u)\ge m$  then}
	\label{num>k-general-H} & 0>\widehat{\nu}_i  > -(M-1)  \quad \text{ as } i=m,\dots m_{\rad}(u) .
	\end{align}
\end{proposition}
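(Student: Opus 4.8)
The plan is to use the variational (min-max) characterization of the singular eigenvalues $\widehat\nu_i$ in \eqref{radial-singular-M} together with the information on $w$ from the transformed equation \eqref{ode-sect-2}. The key idea, inherited from the classical estimates of \cite{AP,BDG,DIPa}, is to use $w'$ (equivalently the derivative of the solution) as a test function: since $w$ solves an autonomous-type O.D.E., differentiating \eqref{ode-sect-2} shows that $w'$ is closely related to an eigenfunction with eigenvalue $-(M-1)$. Concretely, I would first record from Lemma \ref{lem:w'H1} that $w'\in\mathcal H_M$, so $w'$ is an admissible competitor in the Rayleigh quotients defining the $\widehat\nu_i$.

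For \eqref{nl<k-general-H} the heart of the matter is the following computation: differentiating the equation $-(t^{M-1}w')'=\left(\frac{2}{2+\a}\right)^2 t^{M-1}f(w)$ one checks that $\phi:=w'$ satisfies
\[
-\left(t^{M-1}\phi'\right)'-t^{M-1}\left(\tfrac{2}{2+\a}\right)^2 f'(w)\,\phi=-(M-1)\,t^{M-3}\phi,
\]
so $\mathcal Q_w(w')=-(M-1)\int_0^1 t^{M-3}(w')^2\,dt$, i.e.\ $w'$ realizes exactly the value $-(M-1)$ in the singular Rayleigh quotient \eqref{radial-singular-M}. The point is then to build an $(m-1)$-dimensional subspace on which $\mathcal Q_w$ stays strictly below $-(M-1)$ times the $\mathcal L_M$-norm, which by the min-max description \eqref{Rayleigh-rad-M} forces $\widehat\nu_{m-1}<-(M-1)$. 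The natural construction is to cut $w$ at its interior zeros $t_1<\dots<t_{m-1}$ and form the $m-1$ "truncated" functions supported on the first $m-1$ nodal zones (suitably joined so they lie in $\mathcal H_{0,M}$); because $w$ itself is only at the threshold, the strict inequality should come from the fact that these truncations are not themselves eigenfunctions on $(0,1)$ — a boundary-term/Picone-type argument shows the quotient strictly decreases. I would verify the strictness carefully, since this is where the inequality in \eqref{nl<k-general-H} is genuinely strict rather than merely $\le$.

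For \eqref{num>k-general-H} the strategy is complementary. The hypothesis $f(s)/s>0$ together with Lemma \ref{lemma-max-assoluti} gives monotonicity of $w$ in each nodal zone and control of its critical points, and it also guarantees (via Property 5, simplicity of eigenvalues, and the nodal-domain count) that the singular eigenfunctions have a controlled number of sign changes. The idea is that once $i\ge m$, any test function orthogonal (in $\mathcal L_M$) to the first $m-1$ eigenfunctions can no longer beat the value $-(M-1)$ achieved by $w'$: indeed $w'$ itself has exactly $m-1$ interior zeros, hence by Property 5 it is (up to normalization) the $m$-th eigenfunction with $\widehat\nu_m=-(M-1)$ in the borderline case, and the strict sign condition $f(s)/s>0$ pushes $\widehat\nu_m$ strictly above $-(M-1)$. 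The upper bound $\widehat\nu_i<0$ for $i\le m_\rad(u)$ is immediate from the definition of $m_\rad(u)$ as the number of negative $\widehat\nu_i$.

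The main obstacle I expect is the \emph{strictness} in both directions and the precise identification of $w'$ with the eigenfunction at level $-(M-1)$. The test function $w'$ vanishes at the interior critical points but not at the zeros of $w$, so $w'$ has exactly $m-1$ sign changes (one per interior maximum/minimum located by Lemma \ref{lemma-max-assoluti}); matching this with the nodal-domain characterization in Property 5 is what pins down the index $i=m$ at the threshold $-(M-1)$. Handling the behavior at $t=0$ — ensuring $w'\in\mathcal H_M$ genuinely lies in the right weighted space and that no boundary contribution at the singular endpoint spoils the integration by parts — is the delicate technical step, but Lemma \ref{lem:w'H1} is designed precisely to supply this. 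The condition $f(s)/s>0$ (versus the weaker H.1 alone) is what separates the two regimes: without it one only gets the one-sided bound \eqref{nl<k-general-H}, while with it the eigenvalue at the $m$-th level is strictly squeezed into $(-(M-1),0)$.
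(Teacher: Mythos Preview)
Your overall strategy --- using $\zeta:=w'$ as the key object because it solves the singular problem with value exactly $-(M-1)$ --- matches the paper's. But two concrete steps in your plan do not work as written.

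\textbf{Wrong truncation.} You propose to ``cut $w$ at its interior zeros'' and use the resulting pieces as competitors. That gives the wrong Rayleigh quotient: it is $w'$, not $w$, that realizes the value $-(M-1)$. The paper cuts $\zeta=w'$ at \emph{its} zeros $0=t_0<t_1<\dots<t_{m-1}<1$ (i.e.\ at the critical points of $w$, one per nodal zone), obtaining $\zeta_k:=\zeta\,\chi_{[t_{k-1},t_k]}\in\mathcal H_{0,M}$ for $k=1,\dots,m-1$. Each $\zeta_k$ satisfies $\mathcal Q_w(\zeta_k)=-(M-1)\int t^{M-3}\zeta_k^2\,dt$, and their supports are disjoint, which already forces $m-1$ negative eigenvalues.

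\textbf{Strictness is not a min--max byproduct.} Even with the correct test functions, the variational characterization only gives $\widehat\nu_{m-1}\le -(M-1)$; your ``these truncations are not eigenfunctions, so a Picone-type argument gives strict decrease'' is not a proof. The paper obtains strictness by Sturm comparison via the Wronskian identity
\[
\bigl(t^{M-1}(\psi_i'\zeta-\psi_i\zeta')\bigr)'=-(M-1+\widehat\nu_i)\,t^{M-3}\psi_i\zeta .
\]
First $\widehat\nu_i\neq -(M-1)$, since equality would force $\psi_i$ and $\zeta$ to be proportional, impossible because $\psi_i(1)=0\neq\zeta(1)=w'(1)$. Then if $\widehat\nu_i>-(M-1)$ the identity forces $\psi_i$ to have a zero between any two consecutive zeros of $\zeta$, i.e.\ at least $m-1$ interior zeros, contradicting Property~5 when $i\le m-1$.

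\textbf{On \eqref{num>k-general-H}.} Your assertion that ``$w'$ is (up to normalization) the $m$-th eigenfunction with $\widehat\nu_m=-(M-1)$'' is false for the very reason you note later: $w'(1)\neq 0$, so $w'\notin\mathcal H_{0,M}$ and is never an eigenfunction. The paper uses this the other way round. Under $f(s)/s>0$, Lemma~\ref{lemma-max-assoluti} gives $\zeta$ \emph{exactly} $m-1$ interior zeros. Since $m_{\rad}(u)\ge m$ we have $\widehat\nu_m<0$ and $\psi_m$ has $m$ nodal zones. Again $\widehat\nu_m\neq -(M-1)$ (same boundary obstruction), and if $\widehat\nu_m<-(M-1)$ the Wronskian identity with the roles of $\psi_m$ and $\zeta$ reversed forces $\zeta$ to have at least $m$ interior zeros, a contradiction. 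Monotonicity then gives $-(M-1)<\widehat\nu_i<0$ for $i=m,\dots,m_{\rad}(u)$.
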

  
  \begin{proof}
Let $w$ be as in \eqref{transformation-henon} and  $\zeta=w'\in C^1[0,1]\cap {\mathcal H}_{M}$ by Lemma \ref{lem:w'H1}. Since $w\in C^2[0,1]$ and satisfies \eqref{ode-sect-2} pointwise, a trivial computation shows that
	%	\begin{equation}
	%	\label{A} -\left(r^{N-1}\zeta'\right)' - r^{N-1} f_u(u) \zeta = - (N-1) r^{N-3} \zeta , \qquad 0<r<1 
            %  \end{equation}
                \begin{equation}
	      \label{A}
	      \int_0^1 r^{M-1}\zeta'\varphi' \ dr=\left(\frac 2{2+\a}\right)^2 \int_0^1 r^{M-1}f'(w) \zeta\varphi \ dr- (M-1)\int_0^1 r^{M-3} \zeta\varphi \ dr\end{equation}
                for any $\varphi \in C^{1}_0(0,1)$. 
	Moreover the computations in \cite[Lemma 2.4]{AG-part1} can be repeated obtaining that
		\begin{equation}\label{C}
		\left(r^{M-1} \left(\psi_i'\zeta - \psi_i \zeta'\right)\right)' = - (M-1+\widehat \nu_i) r^{M-3} \psi_i\zeta \ \text{ for $r\in (0,1)$}
		\end{equation}
		whenever $\psi_i$ is an eigenfunction for \eqref{radial-singular-problem-M}  related to $\widehat{\nu}_i<\left(\frac{M-2}{2}\right)^2$. 
		\\
		It is clear that $\zeta$ has at least $m$ zeros in $[0,1]$, indeed since $u$ has $m$ nodal domains the same is true for $w$ so that $\zeta$ has at least one zero in each nodal domain of $w$. Let $0\le t_0<t_1\dots <t_{m-1}\le 1$ be such that $\zeta(t_i)=0$. Because $w$ is a nontrivial solution to  \eqref{ode-sect-2} and \eqref{bc} we can take $t_0=0$, and certainly $t_{m-1}<1$ by the unique continuation principle. 
                For $k=1,\dots m-1$, let $\zeta_k$ be the function that coincides with $\zeta$ on $[t_{k-1}, t_k]$ and is null elsewhere. Certainly $\zeta_k \in {\mathcal H}_{0,N}\subset H^1_{0,N}$, and can be used as test function in \eqref{A} giving 
                	\begin{equation}\label{B}
		\int_0^1 t^{M-1}\left( (\zeta_k')^2 - \left(\frac 2{2+\a}\right)^2
		f'(w) \zeta_k^2\right) dt = - (M-1)	\int_0^1 t^{M-3} \zeta_k^2 dt <0 .
		\end{equation} 
		Recalling that $\zeta_k$ have contiguous supports and so they are orthogonal in $L^2_M$ (see Section \ref{sec:2} for the definition of the space),
		 \eqref{B} implies in the first instance that the quadratic form  $\mathcal Q_{w}$ in \eqref{forma-quadratica-a-rad}  is negative defined in the $m-1$-dimensional space spanned by $\zeta_1,\dots,\zeta_{m-1}$ showing, by \eqref{Rayleigh-rad-M}, that the eigenvalue problem \eqref{radial-eigenvalue-problem-M}  has at least $m-1$ negative eigenvalues $\nu_1, \dots, \nu_{m-1}$.
		 Proposition \ref{prop-prel-2} then implies that also the singular eigenvalue problem \eqref{radial-singular-problem-M} 
		 has at least $m-1$ negative eigenvalues $\widehat \nu_1, \dots, \widehat \nu_{m-1}$.
	Let us check that actually $\widehat\nu_i<-(M-1)$.
                First $ \widehat\nu_i\neq -(M-1)$, otherwise \eqref{C} should imply that $\psi_i$ and $\zeta$ are proportional, which is not possible as $\psi_i(1)=0\neq \zeta(1)$. Next, taking advantage from the identity \eqref{C},
                we can repeat the same arguments used to prove the { last part of Property 5 in Subsection 3.1 in \cite{AG-part1}} to show that, if $\widehat{\nu}_i > -(M-1)$, then $\psi_i$ must have at least one zero
                  between any two consecutive zeros of $\xi$ meaning that $\psi_i$ must have at least $m-1$ internal zeros, contradicting  Property 5 recalled in Section \ref{sec:2}. This concludes the proof of \eqref{nl<k-general-H}.
		
		Further when $f(s)/s>0$ as $s\neq 0$, then $w$ has only one critical point in any nodal region by Lemma \ref{lemma-max-assoluti}. 
		This means that the function $\zeta$ has exactly $m$ zeros, and only $m-1$ internal zeros. 
	Besides, since we are taking that $m_{\rad}(u)\ge m$, also $\widehat\nu_m<0$ thanks to Proposition \ref{general-morse-formula-H} and the related eigenfunction $\psi_m$ has $m$ nodal zones by the Property 5 recalled in Section \ref{sec:2}.
		The inequality \eqref{num>k-general-H} is obtained by comparing $\zeta$ and $\psi_m$. 
		As before certainly $ \widehat\nu_m\neq -(M-1)$, and if $ \widehat\nu_m< -(M-1)$ then  $\zeta$ must have at least $m$ internal zeros, obtaining a contradiction.	
\end{proof}

{ The previous inequalities will play a role in the proof of some asymptotic results on the Morse index of radial solutions to \eqref{general-f-H-5} in \cite{AG18,AG18-2}.}
Now the statement of Theorem \ref{morse-estimate-H} follows by combining the estimate \eqref{nl<k-general-H} with the general formula \eqref{tag-2-H}.

\begin{proof}[Proof of Theorem \ref{morse-estimate-H}]
By \eqref{nl<k-general-H}, via Proposition \ref{general-morse-formula-H}, it is clear that the radial Morse index of $u$ is at least $m-1$, i.e.~\eqref{general-radial-morse-H} holds.
Next putting the estimate \eqref{nl<k-general-H} inside \eqref{tag-2-H} gives \eqref{general-morse-estimate-H}.
\\
Moreover under assumption H.3 it is easy to see that the radial Morse index of $u$ is at least equal to the number of nodal zones.
First we show that, letting $w$ as in \eqref{transformation-henon}, the eigenvalue problem \eqref{radial-eigenvalue-problem-M} 
has at least $m$ negative eigenvalues i.e., by the variational characterization  \eqref{Rayleigh-rad-M}, that the quadratic form $\mathcal Q_{w}$ in \eqref{forma-quadratica-a-rad}   is negative defined in an $m$-dimensional subspace of $H^1_{0,M}$.
Let $0<t_1<t_2<\dots t_m=1$ be the zeros of $w$ in $[0,1]$,  $I_1=(0,t_1)$, $I_i=(t_{i-1},t_i)$ for $i=2,\dots, m$ its nodal domains, and $z_i$ be the function that coincides with $w$ in $I_i$ and is zero elsewhere.
	Using $z_i$ as a test function in \eqref{lane-emden-radial-weak-sol}
	gives
	\[ \begin{split}
	\int_0^1t^{M-1}\!\!
	\left( |z_i'|^2\!
	-\! \left(\frac 2{2+\a}\right)^2 \!\!\!
	f'(w)z_i^2\right) \! dt
	=\left(\frac 2{2+\a}\right)^2 \!\!\!\int_{I_i} t^{M-1}  
\!	\left(\dfrac{f(w)}{w} - f'(w )\right) w^2 dt < 0 \end{split}
	\]
	by H.3. So this part of the proof is concluded, because $z_i\in H^1_{0,M}$  are linearly independent, having contiguous supports.
 Proposition \ref{prop-prel-2} then implies that also the singular eigenvalue problem 
		   \eqref{radial-singular-problem-M}
		 has at least $m$ negative eigenvalues  and Proposition \ref{general-morse-formula-H} yields  that the radial Morse index of $u$ is at least $m$, i.e.~\eqref{radial-morse-f(u)-H} holds.
		Eventually \eqref{morse-estimate-f(u)-H} follows inserting \eqref{radial-morse-f(u)-H} into \eqref{general-morse-estimate-H}.
\end{proof}

  Theorem \ref{morse-estimate-H} extends some previous results on the autonomous case, namely \eqref{general-f-H-5} for $\a=0$, to the case of positive values of $\a$. 
 The proof above is nevertheless a new proof also for the autonomous case,  based upon the singular eigenvalue problem associated with the linearized operator $L_u$. Indeed when $\a=0$ the eigenvalues $\widehat\nu_i$ coincide with the radial singular eigenvalues $\widehat\L^{\rad}_i$ defined in \eqref{radial-singular-i} and  \eqref{nl<k-general-H} and \eqref{num>k-general-H}  become
\begin{align}\label{nl<k-general} 
	& \widehat\L_i^{\rad}  < -(N-1)  & \text{ as } i=1,\dots m-1 \\
	%\intertext{If, in addition, $f(u)/u>0$ as $u\neq 0$, then}
	\label{num>k-general} 
	& 0>\widehat\L_i^{\rad}  > -(N-1)  &\text{ as } i=m,\dots m_{\rad}(u)  
	\end{align}
Some comments on estimates \eqref{nl<k-general} and \eqref{num>k-general}, which are important in providing the bound \eqref{general-morse-estimate-H} on the Morse index of $u$ in the case of $\a=0$. Indeed they imply  that  the parameters $J_i$ appearing in \eqref{tag-2} satisfy $J_i >1$ for $i=1,\dots,m-1$ and $J_i<1$ for $i=m,\dots,m_\rad(u)$. It means that the eigenvalues $\widehat \L_i^\rad$ for $i=m,\dots,m_\rad(u)$ give only the radial contribution (corresponding to $j=0$) to the Morse index of $u$, while the eigenvalues $\widehat \L_i^\rad$ for $i=1,\dots,m-1$ give always also the contribution corresponding to $j=1$.
  \\
 In the general case $\a>0$  the estimate \eqref{nl<k-general-H} implies that $J_i >\frac{2+\a}{2}$  for $i=1,\dots,m-1$, highlighting the role of $\a$ and proving that the Morse index of any nodal radial solution  goes to $+\infty$ as $\a\to \infty$. 
  
  \remove{  
In a variational setting, as for instance when 
\begin{enumerate}[H.4]
\item there exists $p \in \big(1, \frac {N+2}{N-2}\big)$ (or $p>1$ in dimension $N=2$) such that $|f'(s)|\leq C(1+|s|^{p-1})$ for all $s\in \R$,
\end{enumerate} \edz{{\F Lisa, ripensa un pochino questo pezzo e se vuoi mettiamo un corollario in intro} {\AL in intro mi pare troppo, forse qui. } {\F sposterei tutto in sezione 4 a partire da In a variational setting...} {\AL  concordo, in effetti \`e tutto scritto pensando ad H\'enon e non alla $f$ generica}}
{\AL it is possible to produce least energy solutions by minimizing the functional associated with \eqref{general-f-H-5} {\F ending with a positive solution. Veramente credo che la positivit\'a della soluzione ci sia quando il funzionale \'e pari (sostituendo $u$ con $|u|$). Se non \'e pari bo?}
{\taglia   , which is positive by (come si dice?)}.
	 \cite{SSW} proved  that the least energy solution is not radial when $\a$ is sufficiently large (depending on $p$) by estimating the energy of the positive radial solution.
Next following \cite{Bartsch-Weth} one can minimize the functional associated with \eqref{general-f-H-5} on a nodal Nehari set to produce a nodal least energy solution which has two nodal domains and Morse index 2, and considerations based on the Morse index imply that such solution is not radial for $\a=0$, see \cite{AP} and \cite{BDG}.
Estimate \ref{general-morse-estimate-H} then extends this matter also to the case $\a>0$, proving that
\begin{corollary}\label{cor:non-radial}
		Assume that $\a\ge 0$ and $f$ satisfies H.1, H.4, and let $u^{\ast}$ a nodal least energy solution solution to \eqref{general-f-H}. Then $u^{\ast}$ is not radial.
	\end{corollary}
}
}
%{\taglia then following \cite{Bartsch-Weth} one can minimize the functional associated with \eqref{general-f-H-5} on a nodal Nehari set to produce a nodal least energy solution which has two nodal domains and Morse index 2.  So the estimate \eqref{general-morse-estimate-H} proves that neither the least energy nodal solution to H\'enon type problems is radial, as already noticed in various papers like \cite{AP} and \cite{BDG} for autonomous problems. {\F In realt\'a questo \'e stato dimostrato da Su-Smets-Willem per soluzioni positive least-energy ed era il caso pi\'u interessante. Se vuoi lo scriviamo in un corollario ma citiamo prima \cite{SSW}}}
 
Furthermore estimate \eqref{nl<k-general-H}, together with \cite[Corollary 4.13]{AG-part1},  gives informations also on the Morse index of any radial solution in symmetric spaces.
If $\mathcal G$ is any subgroup of the  orthogonal  group $O(N)$ 
 we say that a function $\psi(x)$ is $\mathcal{G}$-invariant if 
\[
\psi(g(x))=\psi(x) \quad \forall \ x\in \Omega \quad \forall \ g \in \mathcal{G}.
\]
%{\taglia We denote by $H^1_{0,\mathcal G}$, $m^{\mathcal G}$ respectively  the subset of $H^1_0(B)$ made up by $\mathcal G$-symmetric functions, and the Morse index of a solution in $H^1_{0,\mathcal G}$ and we get the following}
{ We denote by $H^1_{0,\mathcal G}$ the subset of $H^1_0(B)$ made up by $\mathcal G$-symmetric functions and by $m^{\mathcal G}(u)$ the Morse index of a solution $u$ when computed in the space $H^1_{0,\mathcal G}$.}
\begin{corollary}\label{cor:morse-estimate-G}
Take $\a\ge 0$ and $f$ satisfying H.1, and let $u$ be a radial solution to \eqref{general-f-H} with $m$ nodal zones such that H.2 holds. Then 
	\begin{align*}
	m^{\mathcal G}(u) & \geq (m-1)+(m-1)\sum_{j=1}^{[\frac{2+\a}2]}N_{j}^{\mathcal G} .
	\intertext{If also assumption H.3 holds true, then }
	m^{\mathcal G}(u) & \geq m+(m-1)\sum_{j=1}^{[\frac{2+\a}2]}N_{j}^{\mathcal G}.
	\end{align*}
\end{corollary}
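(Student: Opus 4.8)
The plan is to mirror the proof of Theorem \ref{morse-estimate-H}, replacing the global Morse index formula \eqref{tag-2-H} by its $\mathcal G$-symmetric analogue. By \eqref{decomp-autofunz-f(u)-H} the negative eigenfunctions of $L_u$ factor as $\phi_i\big(r^{\frac{2+\a}2}\big)Y_j(\theta)$, and since the radial factor is automatically $\mathcal G$-invariant, such a product lies in $H^1_{0,\mathcal G}$ precisely when its angular factor $Y_j$ is $\mathcal G$-invariant. I would therefore invoke \cite[Corollary 4.13]{AG-part1} in the form
\[
m^{\mathcal G}(u)=\sum_{i=1}^{m_{\rad}}\sum_{j=0}^{\lceil J_i-1\rceil}N_j^{\mathcal G},
\]
with $J_i$ as in \eqref{tag-2-H} and $N_j^{\mathcal G}$ the number of linearly independent $\mathcal G$-invariant spherical harmonics of degree $j$; in particular $N_0^{\mathcal G}=1$. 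The whole argument then reduces to bounding from below the inner sums.

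First I would convert the eigenvalue estimate \eqref{nl<k-general-H} into a lower bound on $J_i$. Setting $\widehat\nu_i=-(M-1)$ makes the quantity under the square root in \eqref{tag-2-H} equal to $(\frac{M-2}{2})^2+(M-1)=\frac{M^2}{4}$, whence $J_i=\frac{2+\a}{2}$; since $J_i$ is strictly decreasing in $\widehat\nu_i$, the strict inequality $\widehat\nu_i<-(M-1)$ gives $J_i>\frac{2+\a}{2}$ for $i=1,\dots,m-1$. A short check on the ceiling (distinguishing whether $\frac{2+\a}{2}$ is an integer) then yields $\lceil J_i-1\rceil\ge[\frac{2+\a}{2}]$, so that for each such $i$
\[
\sum_{j=0}^{\lceil J_i-1\rceil}N_j^{\mathcal G}\ \ge\ \sum_{j=0}^{[\frac{2+\a}{2}]}N_j^{\mathcal G}=1+\sum_{j=1}^{[\frac{2+\a}{2}]}N_j^{\mathcal G}.
\]

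Summing these contributions over $i=1,\dots,m-1$ (all of which index negative radial eigenvalues, since $m_{\rad}(u)\ge m-1$ by \eqref{general-radial-morse-H}) gives
\[
m^{\mathcal G}(u)\ \ge\ (m-1)\Bigl(1+\sum_{j=1}^{[\frac{2+\a}{2}]}N_j^{\mathcal G}\Bigr),
\]
which is the first claimed bound. Under H.3 the estimate \eqref{radial-morse-f(u)-H} provides the further negative radial eigenvalue $\widehat\nu_m$; since $\widehat\nu_m<0$ forces $J_m>0$ and hence $\lceil J_m-1\rceil\ge 0$, its eigenspace contributes at least the $j=0$ term $N_0^{\mathcal G}=1$. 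Adding this $+1$ upgrades the bound to $m+(m-1)\sum_{j=1}^{[\frac{2+\a}{2}]}N_j^{\mathcal G}$, as asserted.

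I expect the only delicate points to be the passage from the strict inequality $J_i>\frac{2+\a}{2}$ to the integer inequality $\lceil J_i-1\rceil\ge[\frac{2+\a}{2}]$, and a clean justification that \cite[Corollary 4.13]{AG-part1} counts the $\mathcal G$-Morse index by exactly the $\mathcal G$-invariant angular modes, i.e.\ that the formula holds verbatim with $N_j$ replaced by $N_j^{\mathcal G}$. Everything else is a direct transcription of the proof of Theorem \ref{morse-estimate-H}.
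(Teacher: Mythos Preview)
Your proposal is correct and follows exactly the approach the paper indicates: the paper does not give a detailed proof of this corollary, but simply states that it follows from estimate \eqref{nl<k-general-H} together with \cite[Corollary 4.13]{AG-part1}, which is precisely what you have spelled out. Your computation that $\widehat\nu_i<-(M-1)$ forces $J_i>\frac{2+\a}{2}$ and hence $\lceil J_i-1\rceil\ge[\frac{2+\a}{2}]$, and the extra $+1$ from the radial mode under H.3, are the details the paper leaves implicit.
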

Here $N_j^{\mathcal G}$ stands for the multiplicity of $j^{th}$ eigenvalue of the Laplace-Beltrami operator in $H^1_{0,\mathcal G}$.
\remove{\\
Besides under the additional assumption H.4 the minimization technique on the nodal Nehari set can be performed also in $H^1_{0,\mathcal G}$, ending with a nodal solution $u$ which belongs to $H^1_{0,\mathcal G}$ and has $m^{\mathcal G}(u)=2$. 
In that way Corollary \ref{cor:morse-estimate-G} ensures that the minimal energy nodal {\it and $\mathcal G$-symmetric} solution is not radial whenever $N_1^{\mathcal G}\neq 0$, for every $\a\ge 0$. As $\a$ increases, the condition under which the minimal energy nodal {\it and $\mathcal G$-symmetric} solution can be radial become more stringent, and it is expected that the multiplicity of nonradial solutions  increases.
This considerations are exploited in \cite{GI}, dealing with the Lane Emden problem in the disk, and in \cite{AG18-2}, \cite{A}, dealing with and the H\'enon problem.

\edz{\F La parte da Beside ..... la sposterei anch'essa in sezione 4}
\  }

\section{Power type nonlinearity: the standard H\'enon equation}\label{sec:4}
We focus here on the particular case $f(u)= |u|^{p-1}u$ where $p>1$ is a real parameter.
	For $\alpha>0$ we have the H\'enon problem
\begin{equation}\label{H}
\left\{\begin{array}{ll}
-\Delta u = |x|^{\alpha}|u|^{p-1} u \qquad & \text{ in } B, \\
u= 0 & \text{ on } \partial B,
\end{array} \right.
\end{equation}
but all the following discussion applies also to the case $\alpha=0$, i.e. to the Lane-Emden problem 
\begin{equation}\label{LE}
\left\{\begin{array}{ll}
-\Delta u = |u|^{p-1} u \qquad & \text{ in } B, \\
u= 0 & \text{ on } \partial B.
\end{array} \right.
\end{equation}

To begin with we see that problem \eqref{H} admits classical solutions with any given number of nodal zones under assumption H.2', namely when 
		the exponent $p$ satisfies 
\begin{equation}\label{cond-p-Henon} \begin{array}{lcl}
p\in (1,+\infty) &  & \text{ when $N=2$} , \\  p\in \left(1,p_{\a,N}\right) &  \ \text{with } p_{\a,N}=\frac{N+2+2\a}{N-2} & \text{ when  $N>2$. } \end{array}
\end{equation}
More precisely we show the following
\begin{proposition}\label{prop:henon}
Assume that $\a\geq 0$ and $p$ satisfies \eqref{cond-p-Henon}. Any weak radial solution to \eqref{H} is classical. For any $m>1$ problem \eqref{H} admits a unique radial solution $u$ which is positive in the origin and has $m$ nodal regions. Further $u$ is strictly decreasing in its first nodal zone and it has a unique critical point $\sigma_i$ in any nodal zone $(r_{i-1},r_{i})$. Moreover
		\[u(0)>|u(\sigma_1)|>\dots |u(\sigma_{m-1})|\]
		and $0$ is the global maximum point.
\end{proposition}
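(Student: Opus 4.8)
The plan is to transfer everything to the autonomous ODE \eqref{ode-sect-2}--\eqref{bc} satisfied by $w(t)=u(r)$ with $t=r^{(2+\a)/2}$ and $M$ as in \eqref{Malpha}, exploiting three features of the power nonlinearity $f(s)=|s|^{p-1}s$: it is odd, it satisfies $f(s)/s=|s|^{p-1}>0$ for $s\neq0$, and it fulfils H.3, since $f'(s)=p|s|^{p-1}>|s|^{p-1}=f(s)/s$ for $p>1$. I write $\kappa:=\left(\tfrac{2}{2+\a}\right)^2$ throughout. For the regularity claim, since the results of \cite{AG-part1} that promote weak solutions to classical ones require H.2, I would first show $u\in L^\infty(B)$. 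Condition \eqref{cond-p-Henon} is exactly the subcriticality $p<\frac{M+2}{M-2}$ of the transformed problem (a direct computation gives $\frac{M+2}{M-2}=\frac{N+2+2\a}{N-2}=p_{\a,N}$), so a Brezis--Kato/bootstrap argument applied to the integral identity $t^{M-1}w'(t)=-\kappa\int_0^t s^{M-1}|w|^{p-1}w\,ds$, starting from $w\in H^1_{0,M}$, yields $w\in L^\infty$, hence $u\in L^\infty$ and $f'(u)=p|u|^{p-1}\in L^\infty$; thus H.2 holds, and \cite[Lemma 4.6, Corollary 4.8]{AG-part1} give that $w\in C^2[0,1]$ solves \eqref{ode-sect-2} classically, so $u$ is classical.

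Next I would prove existence and uniqueness by shooting and scaling. The initial value problem for \eqref{ode-sect-2} with $w(0)=1$, $w'(0)=0$ has a unique global solution $w$ (via its integral formulation, as in \cite{AG-part1}), and the equation is invariant under $v_\lambda(t):=\lambda^{2/(p-1)}w(\lambda t)$, with $v_\lambda(0)=\lambda^{2/(p-1)}>0$. The crux is to show that $w$ oscillates, i.e.\ has zeros $0<\rho_1<\rho_2<\dots\to\infty$: the energy $E(t)=\tfrac12(w')^2+\kappa\frac{|w|^{p+1}}{p+1}$ obeys $E'(t)=-\frac{M-1}{t}(w')^2\le0$, and subcriticality of $p$ is precisely what forces infinitely many sign changes rather than a settling of $w$ to one sign. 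Given $m$, the choice $\lambda=\rho_m$ places the $m$-th zero of $v_\lambda$ at $t=1$, producing a solution with exactly $m$ nodal regions and $v_\lambda(1)=0$, positive at the origin; reverting \eqref{transformation-henon} yields the desired solution of \eqref{H}. For uniqueness, the $m$-th zero of $v_\lambda$ is located at $\rho_m/\lambda$, strictly decreasing in $\lambda$, so exactly one $\lambda$ realises $m$ nodal regions with vanishing at $t=1$; since every solution positive at the origin with $w'(0)=0$ is such a $v_\lambda$, the radial solution with $m$ nodal zones is unique.

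The qualitative properties then follow by applying Lemma \ref{lemma-max-assoluti} to $w$. As $f(s)/s>0$, the lemma gives that $w$ is strictly decreasing in its first nodal zone, that $w(0)={\mathcal M}_0$ is the global maximum, and that $w$ has a unique critical point $s_i$ in each nodal zone $(t_i,t_{i+1})$, $i=1,\dots,m-1$; since $f$ is odd it also gives the chain ${\mathcal M}_0>{\mathcal M}_1>\dots>{\mathcal M}_{m-1}$. Because $r\mapsto t=r^{(2+\a)/2}$ is a strictly increasing diffeomorphism of $(0,1)$ fixing the endpoints, with $u'(r)=\frac{2+\a}{2}r^{\a/2}w'(r^{(2+\a)/2})$ for $r>0$, the nodal set, the monotonicity, and the critical points transfer verbatim to $u$: writing $\sigma_i=s_i^{2/(2+\a)}$ and $|u(\sigma_i)|={\mathcal M}_i$, I obtain that $u$ is decreasing in its first nodal zone, that $0$ is the global maximum point, and that $u(0)>|u(\sigma_1)|>\dots>|u(\sigma_{m-1})|$.

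I expect the oscillation analysis to be the main obstacle: proving that the base solution crosses zero infinitely often (so that solutions with arbitrarily many nodal zones exist) and that the zero-count is strictly monotone under the scaling (so that uniqueness holds). By contrast, the regularity reduces to a bootstrap plus the citations to \cite{AG-part1}, and the qualitative statements reduce directly to Lemma \ref{lemma-max-assoluti} once the oddness and positivity of $f(s)/s$ are noted.
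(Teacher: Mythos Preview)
Your regularity argument and the derivation of the qualitative properties are essentially what the paper does: Lemma \ref{lem:regularity-henon} performs exactly the bootstrap you sketch (starting from the Radial Lemma estimate $|v(s)|\le C s^{-(M-2)/2}$ and iterating through the integral identity until boundedness), and the monotonicity, unique critical points, and the chain of extremal values are obtained by invoking Lemma \ref{lemma-max-assoluti} with $f$ odd and $f(s)/s>0$, just as you do.

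Where you genuinely diverge is existence and uniqueness. The paper does \emph{not} shoot: it builds the $m$-nodal solution variationally via the Nehari method of \cite{BW93}, minimising $\sum_i \inf_{\mathcal N(t_{i-1},t_i)}\mathcal E$ over partitions $0=t_0<\dots<t_m=1$, and then invokes the Ni--Nussbaum uniqueness result \cite{NN} to identify the solution once the sign at the origin is fixed. Your shooting-plus-scaling route is correct and in a sense more elementary---the power-law scaling $v_\lambda(t)=\lambda^{2/(p-1)}w(\lambda t)$ reduces uniqueness to a triviality once oscillation is known---but, as you yourself flag, the oscillation statement (that the base solution has infinitely many zeros precisely when $p<\frac{M+2}{M-2}$) is the real work and is not a one-line consequence of the energy decay; it typically requires a Pohozaev-type identity or a Sturm comparison with the explicit critical profile. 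The paper sidesteps this entirely. A further point in favour of the paper's route is that the Nehari construction is reused downstream: in Lemma \ref{lem:zeri-zeta} the fact that $v$ restricted to each nodal annulus is a \emph{minimiser} (hence has exactly one negative radial eigenvalue there) is a key input, and this comes for free from the variational construction but would need a separate argument under your approach.
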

As in the previous section the proof relies on the transformation \eqref{transformation-henon} that we adapt here to the case of the power nonlinearity so to adsorb the constant. Then a minor variation on the previous discussion shows that 
\begin{corollary}\label{cor:1-henon}
Assume that $\a\geq 0$. $u$ is 
a (weak or classical) radial solution to \eqref{H} if and only if 
\begin{equation}\label{transformation-henon-no-c}
v(t)= \left(\frac{2}{2+\a}\right)^{\frac{2}{p-1}} u(r) , \qquad t= r^{\frac{2+\a}{2}}
\end{equation}
solves (in weak or classical sense)
\begin{equation}\label{LE-radial}
\begin{cases}
- \left(t^{M-1} v^{\prime}\right)^{\prime}= t^{M-1} |v|^{p-1} v  , \qquad  & 0< t< 1, \\
v'(0)=0, \;  v(1) =0 , & 
\end{cases}\end{equation}
where $M  =\frac{2(N+\alpha)}{2+\alpha}\in[2,N]$ as in \eqref{Malpha}.
\end{corollary}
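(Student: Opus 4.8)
The plan is to reduce the statement to the change of variables \eqref{transformation-henon} already discussed in Section \ref{sec:2}, and then to exploit the homogeneity of the power nonlinearity to rescale away the constant $\left(\frac{2}{2+\a}\right)^2$ appearing in \eqref{ode-sect-2}. First I would recall that, by \eqref{transformation-henon} together with the discussion following it (see also \cite[Sect.~4.1]{AG-part1}), setting $w(t)=u(r)$ with $t=r^{\frac{2+\a}2}$ establishes an invertible correspondence between radial solutions $u$ of \eqref{H} and solutions $w$ of \eqref{ode-sect-2} with $f(s)=|s|^{p-1}s$, namely
\[
-\left(t^{M-1}w'\right)'=\left(\frac{2}{2+\a}\right)^2 t^{M-1}|w|^{p-1}w,\qquad 0<t<1,
\]
supplemented with the boundary conditions \eqref{bc}, and that this correspondence holds both in the classical and in the weak sense (the weak formulation being \eqref{lane-emden-radial-weak-sol}). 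This reduces the corollary to showing that $w$ solves this equation if and only if $v:=c\,w$ solves \eqref{LE-radial}, where $c=\left(\frac{2}{2+\a}\right)^{\frac{2}{p-1}}$ is exactly the constant in \eqref{transformation-henon-no-c}.

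The key computation is then the scaling. Since the operator $-(t^{M-1}(\cdot)')'$ is linear while the right-hand side is $p$-homogeneous, substituting $w=v/c$ produces
\[
-\left(t^{M-1}v'\right)'=\left(\frac{2}{2+\a}\right)^2 c^{\,p-1}\,t^{M-1}|v|^{p-1}v,
\]
and the choice $c^{\,p-1}=\left(\frac{2}{2+\a}\right)^{-2}$, that is $c=\left(\frac{2}{2+\a}\right)^{2/(p-1)}$, makes the constant equal to $1$ and yields exactly \eqref{LE-radial}. The boundary conditions transform trivially: since $v=c\,w$ with $c\neq 0$ one has $v'(0)=c\,w'(0)$ and $v(1)=c\,w(1)$, so \eqref{bc} for $w$ is equivalent to $v'(0)=0$, $v(1)=0$. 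As every step is the composition of the invertible map \eqref{transformation-henon} with multiplication by the nonzero constant $c$, both implications of the ``if and only if'' follow at once.

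In the weak setting I would simply observe that the change of variable $t=r^{\frac{2+\a}2}$ turns the weak formulation of \eqref{H} into \eqref{lane-emden-radial-weak-sol}, as already recorded in Section \ref{sec:2}, and that multiplying both the solution and the admissible test functions by the nonzero constant $c$ leaves the weak identity invariant apart from the homogeneity factor absorbed above; in particular membership in $H^1_{0,M}$ is preserved under constant rescaling. The only point requiring (minor) care is precisely this bookkeeping of the weak formulation and of the underlying function spaces, so I do not expect a genuine obstacle. The essential feature making the whole argument work is the homogeneity of $f(s)=|s|^{p-1}s$: for a general nonlinearity the constant $\left(\frac{2}{2+\a}\right)^2$ could not be eliminated by a mere rescaling, which is exactly why \eqref{ode-sect-2} carries it while \eqref{LE-radial} does not.
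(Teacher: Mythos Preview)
Your approach is exactly the one the paper indicates: it explicitly says the corollary follows from the transformation \eqref{transformation-henon} ``adapted here to the case of the power nonlinearity so to absorb the constant,'' and your rescaling $v=c\,w$ is precisely that adaptation. One harmless slip to fix: substituting $w=v/c$ actually yields the coefficient $\left(\frac{2}{2+\a}\right)^2 c^{-(p-1)}$ (not $c^{\,p-1}$), so the correct condition is $c^{\,p-1}=\left(\frac{2}{2+\a}\right)^{2}$, which indeed gives $c=\left(\frac{2}{2+\a}\right)^{2/(p-1)}$ as in \eqref{transformation-henon-no-c}; your intermediate line and the condition $c^{\,p-1}=\left(\frac{2}{2+\a}\right)^{-2}$ are mutually inconsistent even though you land on the right $c$.
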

Next we show that under assumption \eqref{cond-p-Henon} a bootstrap argument applies to radial solutions to \eqref{H} showing the regularity statement of Proposition \ref{prop:henon}. To simplify the notations we prove it for a weak solution $v$ to \eqref{LE-radial} provided that 
\begin{equation}\label{cond-p-Lane-Emden} \begin{array}{lcl}
p\in (1,+\infty) &  & \text{ when $M=2$} , \\  p\in \left(1,p_M\right) &  \ \text{with } p_M=\frac{M+2}{M-2} & \text{ when  $M>2$ .} \end{array}
\end{equation}
Assumption \eqref{cond-p-Lane-Emden} is the restatement of \eqref{cond-p-Henon} in terms of $M$ and highlights the fact that $p_M=p_{\a,N}$ is exactly the critical exponent for existence results. The regularity of $u$ then follows from the regularity of $v$ by Corollary \ref{cor:1-henon}.
\begin{lemma}\label{lem:regularity-henon}
Let $v$ be any  weak solution to \eqref{LE-radial}. Then $v\in C^2[0,1]$ and is a classical solution.
\end{lemma}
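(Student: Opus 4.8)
The plan is to reduce the whole statement to a single \emph{a priori} bound $v\in L^\infty(0,1)$, after which the regularity theory already developed in \cite{AG-part1} applies verbatim. Indeed, on any interval $[\delta,1]$ with $\delta>0$ the weight $t^{M-1}$ is bounded and bounded away from zero, so \eqref{LE-radial} is a regular second-order ODE; since the restriction of $v\in H^1_{0,M}$ lies in $H^1(\delta,1)$, the one–dimensional embedding $H^1(\delta,1)\hookrightarrow C([\delta,1])$ shows that $v$ is continuous and bounded on $[\delta,1]$, hence $|v|^{p-1}v$ is continuous there and classical ODE regularity gives $v\in C^2[\delta,1]$. Thus the only genuine issue is the behaviour of $v$ as $t\to 0^+$, where the weight degenerates and $v$ could \emph{a priori} blow up. Equivalently, it suffices to prove $v\in L^\infty(0,1)$: then $f(s)=|s|^{p-1}s$ satisfies H.1 and $f'(v)=p|v|^{p-1}\in L^\infty$, so H.2 holds and \cite[Lemma 4.6, Corollary 4.8]{AG-part1} yield $v\in C^2[0,1]$ together with $v'(0)=0$ and the classical equation.

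The $L^\infty$ bound is obtained by a bootstrap on the integrability of $v$, in which the subcriticality \eqref{cond-p-Lane-Emden} enters decisively. Writing $L^q_M$ for the weighted space with norm $\big(\int_0^1 t^{M-1}|v|^q\,dt\big)^{1/q}$, the starting point is the weighted Sobolev embedding $H^1_{0,M}\hookrightarrow L^{2^*}_M$ with $2^*=\tfrac{2M}{M-2}$ (for $M>2$; for $M=2$ one has $H^1_{0,2}\hookrightarrow L^q_2$ for every finite $q$, and the argument below is only easier). Testing the weak formulation of \eqref{LE-radial} with a suitable truncation of $|v|^{2(\beta-1)}v$ and inserting the Sobolev inequality for $|v|^{\beta-1}v$ gives, for $v\in L^{q}_M$, the implication $v\in L^{q'}_M$ with $q'=\tfrac{2^*}{2}(q-p+1)$. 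Since the affine map $q\mapsto\tfrac{2^*}{2}(q-p+1)$ has slope $\tfrac{2^*}{2}=\tfrac{M}{M-2}>1$ and, thanks to $p<p_M=\tfrac{M+2}{M-2}$, its fixed point $\tfrac{M(p-1)}{2}$ lies strictly below the initial exponent $q_0=2^*$, iterating produces an increasing sequence $q_k\to+\infty$; hence $v\in L^q_M$ for every finite $q$.

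To convert this into a pointwise bound at the origin I would use the integral representation obtained by integrating \eqref{LE-radial} twice and using $v(1)=0$,
\[
v(t)=\int_t^1 \tau^{1-M}\left(\int_0^\tau s^{M-1}|v(s)|^{p-1}v(s)\,ds\right)d\tau .
\]
Fixing $q>\tfrac{Mp}{2}$ (available by the previous step) and estimating the inner integral by H\"older's inequality gives $\int_0^\tau s^{M-1}|v|^{p}\,ds\le C\,\tau^{M(1-p/q)}$; since $M(1-p/q)>M-2$, substitution into the outer integral leaves $\int_t^1\tau^{1-Mp/q}\,d\tau$, which is convergent as $t\to0^+$. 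Therefore $v$ is bounded and has a finite limit at $0$, closing the $L^\infty$ estimate and, as explained above, completing the proof.

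The main obstacle is precisely the degeneracy of the weight at $t=0$: neither the boundedness of $v$ nor the condition $v'(0)=0$ is automatic from membership in $H^1_{0,M}$, and both must be extracted from the equation itself. The mechanism making this work—and the exact point where the hypothesis on $p$ is used—is that strict subcriticality \eqref{cond-p-Lane-Emden} forces a net gain at every bootstrap step (recursion slope exceeding $1$, fixed point below $2^*$), so that the iteration reaches integrability exponents large enough for the weighted integral representation to deliver a bound up to the origin.
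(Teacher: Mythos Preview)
Your argument is correct, and it reaches the same destination as the paper's proof---reduction to $v\in L^\infty(0,1)$ followed by \cite[Corollary 4.8]{AG-part1}---but by a different bootstrap. The paper starts from the \emph{pointwise} Radial Lemma bound $|v(s)|\le C s^{-(M-2)/2}$ (or $|v(s)|\le C|\log s|^{1/2}$ when $M=2$) and iterates this directly through the double integral representation, improving the pointwise decay exponent at each step until boundedness is reached; the subcritical condition \eqref{cond-p-Lane-Emden} is what makes the sequence of exponents $\beta_n$ eventually positive. You instead run a Moser--type iteration on weighted $L^q_M$ norms (testing against truncations of $|v|^{2(\beta-1)}v$ and invoking Sobolev) to push $v$ into every $L^q_M$, and only then pass once through the integral representation with H\"older to get the pointwise bound. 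Your route is more in the spirit of elliptic PDE regularity and would transplant to non-radial settings; the paper's route is more ODE-flavoured and slightly more elementary, exploiting that in one dimension the Radial Lemma already gives a workable pointwise starting point. Both use the subcritical assumption at exactly the same place---to guarantee a net gain per iteration (in your language: the fixed point $\tfrac{M(p-1)}{2}$ of the recursion lies below $2^*$).
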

 \begin{proof}
	Here we prove that $v\in C[0,1]$. The finer regularity then follows by \cite[Corollary 4.8]{AG-part1}.
	Since $v\in H^1_{0,M}$ it is continuous on $(0,1]$ and differentiable a.e.  with	
		\begin{equation}
		\label{e1}	v(t) = - \int_t^1 v'(\tau) d\tau. 
		\end{equation}	
		So we only need to show that $v$ is bounded near at $t=0$.
		
		By the embedding of $H^1_{0,M}$ into $L^{p}_M$ (see \cite[Lemma 5.4]{AG-part1}) we have that $t^{M-1} |v|^p \in L^1(0,1)$, so starting from the weak formulation of \eqref{LE-radial} and using  the same arguments used to obtain the equation (2.12) in  \cite[Proposition 2.2]{AG-part1} we end up with
		\begin{equation}\label{e2} 
		v'(\tau) = - \tau^{1-M} \int_0^\tau s^{M-1} |v(s)|^{p-1} v(s)\, ds .
		 \end{equation}
		If $M=2$, the Radial Lemma in $H^1_{0,M}$ proved in \cite[Lemma 5.2]{AG-part1} states that $|v(s)|\le C |\log s|^{\frac{1}{2}}$, which inserted into \eqref{e2} gives
	\[ 	|v' (\tau)| \le   \tau^{-1} \int_0^\tau s |\log s|^{\frac{p}{2}}  ds \to 0 \quad \text{ as } \tau\to 0 ,\]
		proving  that $v$ is continuous. \\
		Otherwise if  $M>2$ putting together \eqref{e1} and \eqref{e2} gives
		\begin{equation}\label{e3}
		|v(t)| \le  \int_t^1  \tau^{1-M} \int_0^\tau s^{M-1} |v(s)|^{p}  ds \, d\tau .
		\end{equation}
		Next the same Radial Lemma states that $|v(s)|\le C s^{-\frac{M-2}{2}}$, which inserted into \eqref{e3} gives
	\[
	|v(t)| \le  C \int_t^1  \tau^{1-M} \int_0^\tau s^{M-1- p\frac{M-2}{2}} ds \, d\tau \le  C \int_t^1  \tau^{1- p\frac{M-2}{2}}  d\tau ,
	\]
	where $C$ stands for a constant that can change from line to line.
	If $p< 4/(M-2)$, we have obtained that $v(t)$ is bounded near at $t=0$ as wanted. If $p=4/(M-2)$, then $|v(t)|\le C \left(1 +|\log t|\right)$ and we can conclude as in the case $M=2$.
	If, else, $p>4/(M-2)$, we have
	\begin{equation}\label{e4}
	|v(t)| \le C (1 + t^{2- p\frac{M-2}{2}} )
	\end{equation} 
	with $2- p\frac{M-2}{2} > - \frac{M-2}{2}$, so we can start a bootstrap argument. Inserting \eqref{e4} into \eqref{e3} yields
	\[ 	|v(t)| \le  C \int_t^1  \tau^{1-M} \int_0^\tau s^{M-1} \left(1 + s^{2- p\frac{M-2}{2}}\right)^p ds \, d\tau \le C \int_t^1  \left(1 + \tau^{1+p\left(2- p\frac{M-2}{2}\right)}\right) 
\]
	and iteratively
		\[ 	|v(t)| \le C \int_t^1  \left(1 + \tau^{1+\beta_n}\right) d\tau   \quad \text{ for } \  \beta_n = 2 \sum\limits_{k=0}^n p^k - \frac{M-2}{2} p^{n+1}	.\]
		If at some step $\beta_n = -2$ we infer $|v(t)| \le C \left(1 +|\log t|\right)$ and conclude as in the case $M=2$.
		Otherwise it is certain that after a finite number of steps $\beta_n > 0$, implying that $v(t)$ is bounded near at $t=0$.
		Actually $\beta_n = 2 p^{n+1} \left( \sum\limits_{k=0}^n p^{-1-k} - \frac{M-2}{4} \right)$ and $\sum\limits_{k=0}^n p^{-1-k} - \frac{M-2}{4} \to \frac{1}{p-1} - \frac{M-2}{4} > 0$ because of \eqref{cond-p-Lane-Emden}.
		\end{proof}
	
\begin{remark}\label{remark:H'}
		The same arguments in the proof of Lemma \ref{lem:regularity-henon} show that H.2 holds for any weak radial solution to \eqref{general-f-H}, when the nonlinearity $f$ satisfies the hypothesis H.1' mentioned in the introduction.
	\end{remark}

Next we recall how a solution to \eqref{LE-radial} with $m$ nodal zones can be produced provided that
\eqref{cond-p-Lane-Emden} holds.  This proves the existence part in Proposition \ref{prop:henon} again by Corollary \ref{cor:1-henon}.
Let
  \[
    {\mathcal E}(v) = \frac{1}{2}\int_0^1 r^{M-1} |v'|^2 dr-\frac{1}{p+1}\int_0^1 r^{M-1} |v|^{p+1} dr,\]
  be the energy functional associated to \eqref{LE-radial} which is defined on $H^1_{0,M}$ for the embedding of $H^1_{0,M}$ into $L^{2^*_M}_{M}$ as $p$ satisfies \eqref{cond-p-Lane-Emden}, see Lemma 5.3 in \cite{AG-part1}, where by $L^q_M$ we denote the extension to $q>1$ of the Lebesgue space $L^2_M$ in Section \ref{sec:2} and $2^*_M=\frac{2M}{M+2}$. Then, critical points of ${\mathcal E}$ are solutions to \eqref{LE-radial} and lie on the Nehari manifold
\[	{\mathcal N} =\left\{ v\in H^1_{0,M} \, : \, \int_0^1 r^{M-1} |v'|^2 dr=\int_0^1 r^{M-1} |v|^{p+1} dr \right\}\]
The compactness of the previous  embedding implies also that the minimum of ${\mathcal E}$ on ${\mathcal N}$ 
is attained and produces for every $p$ a couple of solutions $v^-<0<v^+$ to \eqref{LE-radial} such that $v^+=-v^-$, so that \eqref{LE-radial} admits a 
unique (by \cite{NN}) positive solution. 
By such minimality property one can also deduce that its radial Morse index is at most one and since H.3  is satisfied, then it is exactly one, by \eqref{radial-morse-f(u)-H}.

Moreover, since the nonlinear term $f(u)=|u|^{p-1}u$ is odd then
problem \eqref{LE-radial} admits infinitely many nodal solutions.
{In particular for every positive integer $m$, one can produce a solution $v$  to \eqref{LE-radial} with 
	\begin{equation}\label{segno-in-zero}
	v(0)>0
	\end{equation}
	which has exactly $m$ nodal zones, namely such that	there are $0 <t_1< t_2< \dots  t_m=1$ with 	\[\begin{array}{cll}
	v(r) > 0 & \; \text{ as } \; 0<r<t_1, \; & v(t_{i})=0 ,\\
	(-1)^i v(r) > 0 & \; \text{ as } \, t_i<r<t_{i+1}, & \end{array} \]
	as $i=1,\dots m-1$.
	It can be done by the so called Nehari method (see, for instance,  \cite{BW93}), i.e.~by introducing the spaces 
	\begin{align*}
	H^1_{0,M}(s,t)& =\{v\in H^1_M \, : \, v(s)=0=v(t) \}, 
	\intertext{the energy functionals}
	{\mathcal E}_{s,t}(v)& = \frac{1}{2}\int_s^t r^{M-1} |v'|^2 dr-\frac{1}{p+1}\int_s^t r^{M-1} |v|^{p+1} dr,
	\intertext{and the Nehari sets}
	{\mathcal N}_{s,t}& =\left\{ v\in H^1_{0,M}(s,t) \, : \, \int_s^t r^{M-1} |v'|^2 dr=\int_s^t r^{M-1} |v|^{p+1} dr \right\} ,
	\end{align*}
	and solving the minimization problem
	\begin{equation}\label{neharimin}
	\Lambda(t_1,\cdots t_{m-1}) := \min\left\{  \sum\limits_{i=1}^m \inf\limits_{{\mathcal N}(t_{i-1},t_{i})} {\mathcal E} \, : \, 0=t_0<t_1<\cdots<t_m=1\right\}.
	\end{equation}
	Afterwards it can be checked like in \cite[Lemma 5.1]{BW93} that choosing $t_0,t_1,\dots t_m$ which realize \eqref{neharimin} and gluing together, alternatively, the positive and negative solution in the sub-interval $(t_{i-1}, t_i)$, gives a nodal solution to \eqref{LE-radial}.
		Requiring \eqref{segno-in-zero} is sufficient to identify $v$ by the uniqueness results in \cite{NN}. 
		
To conclude the proof of Proposition \ref{prop:henon} it is needed to prove the qualitative properties of the solution to \eqref{H}. Via Corollary \ref{cor:1-henon}, it suffices to check the analogous properties of the solution to \eqref{LE-radial}.
To state them we need some more notations and write
\begin{align*}
{\mathcal M}_0 & = \sup \{ v(t) \ : \ 0<t<t_1 \}, \\
{\mathcal M}_i & = \max \{ |v(r)|\, : \, t_{i} \le r \le t_{i+1} \} ,
\end{align*}
\[
0=t_0 <t_1< s_1 < t_2< \dots t_{m-1}<s_{m-1}< t_m=1,  \]
where $t_i$ are the zeros of $v$, any $s_i$ is the extremal point of $v$ restricted to the nodal region $(t_i, t_{i+1})$, and ${\mathcal M}_i$ the respective extremal value.

\begin{lemma}\label{prop-4.2-H}
	Let $v$ be a weak solution to \eqref{LE-radial} with $m$ nodal zones which is positive in the first one (starting from $0$).  Then	
	\[ v(0)= {\mathcal M}_0 , \qquad v'(0) = 0.\] 
	Besides $v$ is strictly decreasing in its first nodal zone and $s_i$ is the only critical point in the nodal set $(t_{i,p}, t_{i+1,p})$ for $i=1,\dots m-1$ with 
	\[{\mathcal M}_0 > {\mathcal M}_1 > \dots {\mathcal M}_{m-1}.\]
	In particular $0$ is the global maximum point.
\end{lemma}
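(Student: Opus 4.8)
The plan is to recognize that Lemma \ref{prop-4.2-H} is the specialization of Lemma \ref{lemma-max-assoluti} to the power nonlinearity, so the entire statement will follow once the hypotheses of that lemma are checked for the solution $v$ of \eqref{LE-radial}. To make the reduction precise, observe that $v$ solves equation \eqref{ode-sect-2} with the nonlinearity $\tilde f(s)=\left(\frac{2+\a}{2}\right)^2|s|^{p-1}s$, the extra factor simply undoing the constant absorbed through the normalization \eqref{transformation-henon-no-c}. This $\tilde f$ is odd and satisfies $\tilde f(s)/s=\left(\frac{2+\a}{2}\right)^2|s|^{p-1}>0$ for $s\neq 0$, exactly the two extra conditions required in Lemma \ref{lemma-max-assoluti}; alternatively one may note that the proof of that lemma uses the right-hand side only through its sign, so the positive multiplicative constant is immaterial.

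Next I would verify the standing structural assumptions. The nonlinearity belongs to $W^{1,1}_{\loc}(\R)$, so H.1 holds, and by Lemma \ref{lem:regularity-henon} the weak solution $v$ is of class $C^2[0,1]$; in particular $v$ is bounded on $[0,1]$, whence $\tilde f'(v)\in L^\infty(0,1)$ and H.2 is in force. With H.1, H.2, the positivity of $\tilde f(s)/s$, and the oddness of $\tilde f$ all verified, Lemma \ref{lemma-max-assoluti} applies verbatim.

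Its conclusions then yield every claim: $v$ decays strictly on its first nodal zone (so $v(0)=\mathcal{M}_0$), each nodal set $(t_i,t_{i+1})$ contains a single critical point $s_i$, the origin is the global maximum point, and --- because $\tilde f$ is odd, hence the primitive $F(s)=\frac{1}{p+1}\left(\frac{2+\a}{2}\right)^2|s|^{p+1}$ is even --- one obtains the full ordering $\mathcal{M}_0>\mathcal{M}_1>\dots>\mathcal{M}_{m-1}$. The sole assertion outside Lemma \ref{lemma-max-assoluti} is $v'(0)=0$, which I would read off either from the boundary condition built into \eqref{LE-radial} together with $v\in C^2[0,1]$, or from the representation \eqref{e2}, giving $v'(\tau)=-\tau^{1-M}\int_0^\tau s^{M-1}|v(s)|^{p-1}v(s)\,ds=O(\tau)\to 0$ as $\tau\to 0^+$ since $v$ is bounded near the origin.

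I do not anticipate a genuine obstacle: the argument is a clean reduction to the already-established Lemma \ref{lemma-max-assoluti}, the only points needing attention being the harmless absorption of the constant and the verification of H.2, the latter guaranteed by the regularity in Lemma \ref{lem:regularity-henon}. The statement of Proposition \ref{prop:henon} for the H\'enon solution $u$ itself is then recovered through Corollary \ref{cor:1-henon}, the change of variables \eqref{transformation-henon-no-c} being an amplitude rescaling by a positive constant together with the increasing reparametrization $t=r^{(2+\a)/2}$, both of which preserve monotonicity, critical points, and the ordering of the extremal values.
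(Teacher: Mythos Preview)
Your proposal is correct and follows precisely the paper's approach: the paper's proof is the single sentence ``It follows by Lemma \ref{lemma-max-assoluti} using Lemma \ref{lem:regularity-henon},'' and you have simply spelled out what that sentence means---identify the power nonlinearity as an instance of \eqref{ode-sect-2}, invoke Lemma \ref{lem:regularity-henon} to secure H.2, and then read off every conclusion (including the full chain $\mathcal M_0>\dots>\mathcal M_{m-1}$ from the oddness clause) from Lemma \ref{lemma-max-assoluti}. Your additional remark on $v'(0)=0$ is a harmless elaboration of what is already implicit in the boundary condition of \eqref{LE-radial} together with the $C^2$ regularity.
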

It follows by Lemma \ref{lemma-max-assoluti} using Lemma \ref{lem:regularity-henon}. 

The Morse index and the degeneracy of a solution $u$ to \eqref{H} can be regarded considering the eigenvalues and singular eigenvalues $\nu_i$ and $\widehat \nu_i$ as in \eqref{radial-eigenvalue-problem-M} and \eqref{radial-singular-problem-M} which in terms of $v$ are given by 
\begin{equation}\label{radial-general-H-no-c-0}
	\left\{\begin{array}{ll}
	- \left(t^{M-1} \psi'\right)'- t^{M-1} p|v|^{p-1} \psi = t^{M-1} {\nu}  \psi & \text{ for } t\in(0,1)\\
	\psi'(0)=0, \ \ \psi(1)=0
	\end{array} \right.
	\end{equation}
and 
\begin{equation}\label{radial-general-H-no-c}
\left\{\begin{array}{ll}
- \left(t^{M-1} \phi'\right)'- t^{M-1} p|v|^{p-1} \phi = t^{M-3} \widehat{\nu}  \phi & \text{ for } t\in(0,1)\\
\phi\in  \mathcal H_{0,M} .
\end{array} \right.
\end{equation} 
Indeed  in the particular case of power nonlinearity we have $p\, |v|^{p-1}=\left(\frac{2}{2+\a}\right)^2 f'(w)$, recalling \eqref{transformation-henon} and \eqref{transformation-henon-no-c}.  
	\\
	Besides the radial solutions produced in Proposition \ref{prop:henon} satisfy in particular the assumption H.2, so that Propositions \ref{general-morse-formula-H} and \ref{non-degeneracy-H} apply. Eventually we end up with

\begin{corollary}
Assume that $\a\geq 0$ and $p$ satisfies \eqref{cond-p-Henon}. The radial singular eigenvalues for the linearized operator $L_u$ are 
\begin{equation}
\label{relazione-autov-no-c}
\widehat\L^{\rad}_{i} = \left(\frac{2+\alpha}{2} \right)^2 \widehat{\nu}_i <\left(\frac{N-2}{2} \right)^2 
\end{equation}
where $\widehat{\nu}_i <\left(\frac{M-2}{2} \right)^2 $ are the eigenvalues of
\eqref{radial-general-H-no-c},  and the Morse index formula \eqref{tag-2-H} holds corresponding to these
$\widehat\nu_i$.
$\psi_i\in\mathcal H_{0,N}$ is an eigenfunction related to $\widehat \L^{\rad}_i$ if and only if $\psi_i(r)=\phi_i(t)$, where $\phi_i\in\mathcal{H}_{0,M}$ is an eigenfunction for problem \eqref{radial-general-H-no-c} related to  $\widehat{\nu}_i$.
For any $N\geq 2$ $u$ is degenerate (but not radially degenerate) if and only if 
 \begin{equation}\label{rel-autov-deg}
        \widehat \nu_k=-\Big(\frac{2+\a}2\Big)^2j(N-2+j) \ \text{ for some }j,k\geq 1.
        \end{equation}
     $u$ is radially degenerate instead if and only if $\widehat{\nu}=0$ is an eigenvalue for \eqref{radial-general-H-no-c} when $N\geq 3$ or $\nu=0$ is an eigenvalue for \eqref{radial-general-H-no-c-0} when $N=2$.
All the corresponding eigenfunctions are as in \eqref{decomp-autofunz-f(u)-H}.
\end{corollary}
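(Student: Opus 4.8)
The plan is to read every assertion off the abstract machinery of Section~\ref{sec:2}, the only genuinely new ingredient being the algebraic identity recorded just before the statement, namely $p\,|v|^{p-1}=\left(\frac{2}{2+\a}\right)^2 f'(w)$. This holds because, by \eqref{transformation-henon} and \eqref{transformation-henon-no-c}, $v$ and $w$ differ only by the multiplicative constant $\left(\frac{2}{2+\a}\right)^{2/(p-1)}$. Consequently the $v$-eigenproblems \eqref{radial-general-H-no-c-0} and \eqref{radial-general-H-no-c} have exactly the same linearized coefficient $\left(\frac{2}{2+\a}\right)^2 f'(w)$ as the $w$-eigenproblems \eqref{radial-eigenvalue-problem-M} and \eqref{radial-singular-problem-M}; since the spaces and the min--max characterizations coincide as well, the numbers $\nu_i$ and $\widehat\nu_i$ are literally identical in both formulations and their eigenfunctions agree. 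So the whole corollary reduces to applying Propositions~\ref{general-morse-formula-H} and \ref{non-degeneracy-H}.

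First I would check that those propositions are available. For $f(s)=|s|^{p-1}s$ with $p>1$ one has $f\in C^1(\R)$, hence H.1; and by Proposition~\ref{prop:henon} together with Lemma~\ref{lem:regularity-henon} the relevant radial solution $u$ is classical, hence bounded, so that $f'(u)=p|u|^{p-1}\in L^\infty(B)$ and H.2 holds as well (cf.\ Remark~\ref{remark:H'}). Thus both propositions apply to $u$.

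Next I would derive the eigenvalue relation \eqref{relazione-autov-no-c}. Proposition~\ref{general-morse-formula-H} gives the negative singular eigenvalues of $L_u$ as $\widehat\L=\left(\frac{2+\a}{2}\right)^2\widehat\nu_i+\l_j$; selecting the radial component $j=0$, for which $\l_0=0$, yields $\widehat\L^{\rad}_i=\left(\frac{2+\a}{2}\right)^2\widehat\nu_i$. The bound is then a short computation with \eqref{Malpha}: since $\frac{M-2}{2}=\frac{N-2}{2+\a}$, one gets $\left(\frac{2+\a}{2}\right)^2\left(\frac{M-2}{2}\right)^2=\left(\frac{N-2}{2}\right)^2$, so the attainment threshold $\widehat\nu_i<\left(\frac{M-2}{2}\right)^2$ translates exactly into $\widehat\L^{\rad}_i<\left(\frac{N-2}{2}\right)^2$. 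The Morse index formula \eqref{tag-2-H} is just Proposition~\ref{general-morse-formula-H} written for these $\widehat\nu_i$, and the eigenfunction correspondence $\psi_i(r)=\phi_i(t)$ is the decomposition \eqref{decomp-autofunz-f(u)-H} taken with the constant spherical harmonic $Y_0$.

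Finally, the degeneracy statements are Proposition~\ref{non-degeneracy-H} specialized to the present setting: radial degeneracy is the presence of a vanishing radial singular eigenvalue, i.e.\ $\widehat\nu_k=0$ (equivalently $\nu_k=0$) for some $k$, phrased for $N=2$ through the classical eigenvalue $\nu$ of \eqref{radial-general-H-no-c-0}; non-radial degeneracy corresponds to the vanishing of some $\widehat\L=\left(\frac{2+\a}{2}\right)^2\widehat\nu_k+\l_j$ with $j\ge1$, and solving for $\widehat\nu_k$ while recalling $\l_j=j(N-2+j)$ produces condition \eqref{rel-autov-deg}; that every kernel element has the product form is the last assertion of Proposition~\ref{non-degeneracy-H}. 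There is no serious obstacle here, since the corollary only repackages established results; the points demanding care are the faithful bookkeeping of the conversion factor $\left(\frac{2+\a}{2}\right)^2$ between the scales of $\widehat\nu$ and $\widehat\L$ (in particular its exact reciprocal appearing in \eqref{rel-autov-deg}), and the verification that the $v$- and $w$-eigenproblems genuinely coincide so that Property~5 and the min--max values transfer unchanged.
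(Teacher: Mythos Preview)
Your proposal is correct and follows exactly the route the paper takes: the corollary is presented there without a separate proof, as an immediate consequence of the identity $p|v|^{p-1}=\left(\frac{2}{2+\a}\right)^2 f'(w)$ (which makes the $v$- and $w$-eigenproblems coincide) together with the verification that H.2 holds so that Propositions~\ref{general-morse-formula-H} and~\ref{non-degeneracy-H} apply. Your added care about the conversion factor and the $j=0$ component is consistent with that framework and fills in the bookkeeping the paper leaves implicit.
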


Before proving Theorem \ref{prop:ultimo-H}, we point out some useful properties of an auxiliary function.

\begin{lemma}\label{lem:zeri-zeta}
	Let $v$ be a weak solution to \eqref{LE-radial} with $m$ nodal zones and 
\begin{equation}
\label{funz-ausil-z}
z=r\, v '+\dfrac{2}{p-1}v.
\end{equation}
The function $z$ has exactly $m$ zeros in $(0,1)$.
\end{lemma}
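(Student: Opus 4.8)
The plan is to reduce the count of zeros of $z$ to a count of critical points of a single scalar function, through the elementary identity
\[
 r^{\frac{2}{p-1}-1}\,z(r)=\frac{d}{dr}\Big(r^{\frac{2}{p-1}}\,v(r)\Big),
\]
which one verifies by differentiating the right-hand side and using the definition \eqref{funz-ausil-z} of $z$. Setting $\Phi(r):=r^{\frac{2}{p-1}}v(r)$ and recalling $r^{\frac{2}{p-1}-1}>0$ on $(0,1)$, the zeros of $z$ in $(0,1)$ coincide \emph{exactly} with the critical points of $\Phi$ there. Since $v\in C^2[0,1]$ by Lemma \ref{lem:regularity-henon}, $\Phi\in C^1[0,1]$; moreover $\Phi$ vanishes precisely at $0=t_0<t_1<\dots<t_{m-1}$ and at $t_m=1$ (the zeros of $v$, noting $\Phi(0)=0$), while keeping the constant sign of $v$ on each nodal interval $(t_{i-1},t_i)$.

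First I would obtain the lower bound. The $m+1$ vanishing points $0=t_0<t_1<\dots<t_m=1$ split $[0,1]$ into the $m$ nodal intervals, on each of which $\Phi$ has constant sign and vanishes at both endpoints. By Rolle's theorem $\Phi$ has at least one critical point in each interval, hence $z$ has at least $m$ zeros in $(0,1)$.

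The upper bound is the heart of the matter: I must show that $\Phi$ has exactly one critical point on each nodal interval, i.e.\ that each nodal bump of $\Phi$ is unimodal. The natural route is to prove that every interior critical point of $\Phi$ is a strict extremum of $|\Phi|$, which forbids two consecutive critical points on one bump. Assuming $v>0$ on the bump, at a point where $\Phi'=0$ one has $v'/v=-\tfrac{2}{(p-1)r}$, and inserting $v''=-\tfrac{M-1}{r}v'-v^{p}$ gives, with $c=\tfrac{2}{p-1}$,
\[
(\log\Phi)''=\frac{c\,(M-2-c)}{r^{2}}-v^{p-1},
\]
whose sign is that of $\Phi''$ at the critical point. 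When $c\ge M-2$, i.e.\ $p\le \tfrac{M}{M-2}$, the first term is $\le 0$, so $(\log\Phi)''<0$: every critical point is a maximum, there is exactly one per bump, and the count is $m$, which together with Lemma \ref{prop-4.2-H} (one critical point per nodal zone of $v$) and Corollary \ref{cor:1-henon} yields the claim.

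The main obstacle is the complementary range $\tfrac{M}{M-2}<p<p_M=\tfrac{M+2}{M-2}$, where the first term above is positive and the pointwise sign of $(\log\Phi)''$ is not settled by this computation alone; in phase-plane terms, writing $h=z/v$ and $W=r^{M-1}(z'v-zv')$ one gets the rotating system $h'=W/(r^{M-1}v^{2})$, $W'=-(p-1)r^{M-1}v^{p+1}h$, and one must control the total rotation over each bump to exclude a ``double bump'' of $\Phi$. This is the delicate point, and I would handle it by combining the Wronskian relation $W'=-(p-1)r^{M-1}|v|^{p-1}v\,z$ with the monotonicity structure of $v$ from Lemma \ref{prop-4.2-H} (so that $v'$ has a single sign change per nodal zone), together with the fact that $z$ is, up to a constant, the unique solution of $-(r^{M-1}z')'=r^{M-1}p|v|^{p-1}z$ regular at the origin while $z(1)=v'(1)\neq0$, so that $z$ never vanishes at the endpoints. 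The whole difficulty is thus concentrated in ruling out extra oscillations of $z$ inside a single nodal region of $v$; once this is done for all admissible $p$, the exact count $m$ follows.
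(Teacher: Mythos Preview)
Your lower bound via the identity $r^{2/(p-1)-1}z=\Phi'$ with $\Phi(r)=r^{2/(p-1)}v(r)$ is correct and in fact cleaner than the paper's version (which checks the sign of $z$ at the zeros $t_i$ of $v$ and invokes the intermediate value theorem).

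The upper bound, however, has a genuine gap: you yourself note that the second--derivative test $(\log\Phi)''=c(M-2-c)/r^{2}-v^{p-1}$ only settles the range $p\le M/(M-2)$ (and $M=2$), and for $M/(M-2)<p<p_M$ you merely outline a phase--plane strategy without carrying it out. The Wronskian identity $W'=-(p-1)r^{M-1}|v|^{p-1}vz$ and the single sign change of $v'$ per bump (Lemma \ref{prop-4.2-H}) are not by themselves enough to bound the rotation of $(h,W)$; a naive continuation of this line does not seem to close.

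The paper uses a different idea that bypasses the delicate exponent range entirely: it exploits the \emph{Nehari} construction of $v$. On each nodal interval $(t_i,t_{i+1})$ the restriction $(-1)^{i}v$ is the \emph{unique positive} solution of the Lane--Emden equation on that sub--interval and, being a Nehari minimizer, has radial Morse index exactly one. Hence the second eigenvalue $\nu_2$ of the weighted Sturm--Liouville problem $-(t^{M-1}\phi')'-t^{M-1}p|v|^{p-1}\phi=\nu\,t^{M-1}\phi$ on $(t_i,t_{i+1})$ is $\ge 0$, and its eigenfunction $\phi_2$ has exactly one interior zero. Since $z$ solves the same linear equation with $\nu=0$, a Sturm--type comparison (via the Lagrange identity $(r^{M-1}(z'\phi_2-z\phi_2'))'=\nu_2 r^{M-1}z\phi_2$) shows $z$ cannot have three or more zeros in $(t_i,t_{i+1})$: if it did, $\phi_2$ would be forced to vanish twice there. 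Together with the parity argument (an odd number of zeros per bump), this gives exactly one zero per nodal zone, for \emph{all} $p$ satisfying \eqref{cond-p-Lane-Emden}.

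In short, the missing ingredient in your attempt is the variational information (Morse index one on each nodal component), which the paper brings in through the Nehari minimality; without it, a purely ODE argument in the range $p>M/(M-2)$ appears hard.
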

\begin{proof}
 By  Lemma \ref{lem:w'H1} and   \cite[Corollary 4.8]{AG-part1} the function $z$ belongs to $H^1_{0,M} \cap C^1[0,1]$, and it is easily seen that  solves 
\begin{equation}\label{eq-z}
\left(r^{M-1}z'\right)'+ pr^{M-1}|v|^{p-1}z=0 
\end{equation}
{in the sense of distributions}. Next, as clearly $pr^{M-1}|v|^{p-1}z$ is at least continuous on $[0,1]$, the same reasoning of   \cite[Proposition 4.6]{AG-part1} proves that $z$ solves \eqref{eq-z} pointwise.
\\
Because of \eqref{segno-in-zero} $z(0)=v(0)>0$, $z(t_1)= t_1 v'(t_1) \le 0$ and similarly $(-1)^i z(t_i)=(-1)^i t_i v'(t_i) \ge 0$. Actually the unique continuation principle guarantees that $(-1)^i z(t_i)=(-1)^i t_i v'(t_i) > 0$, i.e.~$z$ has alternating sign at the zeros of $v$ and therefore it has an odd number of zeros in any nodal zone of $v$. The claim follows because $z$ can not have more than one zero in any nodal zone. 

To see this fact, it is needed to look back to the Nehari construction of the nodal solution $v$.  
By construction $w_0(x):= v(|x|)$ as $|x|\le t_1$ is the unique  positive radial solution to \eqref{H}  settled in the ball $\Omega=\{x\in \R^N\, : \,  |x| < t_{1}\}$ and therefore 
\begin{equation}\label{eigen-ball}
\left\{\begin{array}{ll}
- \left(t^{M-1} \phi'\right)'- t^{M-1} p|v|^{p-1} \phi = t^{M-1}{\nu}  \phi & \text{ for } t\in(0, t_{1})\\
\phi'(0)=\phi(t_{1})=0  
\end{array} \right.
\end{equation}
has exactly one negative eigenvalue $\nu_1$. 
\\
Similarly for $i=1,\dots m-1$ $w_i(x):=(-1)^{i} v(|x|)$ as $t_{i}\le r \le t_{i+1}$  is the unique  positive radial solution to \eqref{H} settled in the annulus $\Omega=\{x\in \R^N\, : \, t_{i} < |x| < t_{i+1}\}$ and then it realizes the minimum of ${\mathcal E}_{t_{i},t_{i+1}}$.
Again it follows that 
\begin{equation}\label{eigen-anello}
\left\{\begin{array}{ll}
- \left(t^{M-1} \phi'\right)'- t^{M-1} p|v|^{p-1} \phi = t^{M-1}{\nu}  \phi & \text{ for } t\in(t_{i}, t_{i+1})\\
\phi(t_i)=\phi(t_{i+1})=0  
\end{array} \right.
\end{equation}
has exactly one negative eigenvalue $\nu_1$.
\\
Now, let assume by contradiction that $z$ has three or more zeros between $t_i$ and $t_{i+1}$, and 
let $\phi_2$, ${\nu}_2$ respectively the second eigenfunction and eigenvalue of
\eqref{eigen-ball} or \eqref{eigen-anello} settled in $(t_i,t_{i+1})$. 
We have seen that 
${\nu}_2\geq 0$, and by the  analogous of Property 5, see Section \ref{sec:2} in the interval $(t_{i},t_{i+1})$ for $i\geq 0$ 
$\phi_2$ has exactly one zero in $(t_i,t_{i+1})$.
If $z$ has three or more zeros between $t_i$ and $t_{i+1}$, then we can reason exactly as in the proof of Property 5 of Subsection 3.1 of \cite{AG-part1} and we prove that $\phi_2$ has at least two zeros in the same interval obtaining a contradiction. 
To see this   we take that $z(r)>0$ on $(s_{1},s_2)$ with $z(s_1)=z(s_2)=0$, which also implies $z'(s_{1})>0$ and $z'(s_{2})<0$. If $\phi_{2}$ does not vanishes inside $(s_{1},s_2)$ we may assume without loss of generality that 
$\phi_{2}(r)>0$ in $(s_{1},s_2)$ and $\phi_{2}(s_{1}), \phi_{2}(s_{2})\ge 0$. 
Repeating the computations in Lemma 2.4 in \cite{AG-part1} we get that 
	\begin{equation}\label{CC}
	\left(r^{N-1} \left(z'\phi_2 - z \phi_2'\right)\right)' = \nu_2 r^{N-1} z \phi_2  \qquad \text{ as } t_i < r < t_{i+1}.
	\end{equation}
Integrating \eqref{CC} on $(s_{1},s_2)$  gives 
\[ s_2^{M-1} z'(s_2)\phi_{2}(s_2)- s_{1}^{M-1}  z'_i(s_{1})\phi_{2}(s_{1}) = \nu_{2}\int_{s_{1}}^{s_2} r^{M-1} z \phi_{2} \ dr. \]
But this is not possible because the l.h.s. is less or equal than zero  by the just made considerations, while the r.h.s. is greater or equal than zero  as $\nu_{2}\geq 0$. The only possibility is that $\nu_2=0$ and $\phi_2(s_1)=\phi_2(s_2)=0$, but again this is not possible since it implies, by uniqueness of an eigenfunction, that $\phi_2$ and $z$ are multiples and this does not agree with $\phi_2(t_i)=0\neq z(t_i)$. 
\end{proof}

We are now in the position to prove Theorem \ref{prop:ultimo-H}:
	$u$ has radial Morse index $m$ and it is radially non-degenerate

\begin{proof}[Proof of  Theorem \ref{prop:ultimo-H}]
	First \eqref{radial-morse-f(u)-H} assures that $m_{rad}(v)\ge m$  which implies, in turn, that $\nu_i < 0$ as $i=1, \dots m$ by  Propositions \ref{general-morse-formula-H} and \ref{prop-prel-2}.\\
       The proof is completed if we show that 
       $\nu_{m+1}>0$. Indeed in this case Proposition \ref{prop-prel-2} forbids $\widehat\nu_{m+1}<0$, thus implying that $m_{\rad}(u)=m$ via Proposition \ref{general-morse-formula-H}, while Proposition \ref{non-degeneracy-H} ensures that $u$ is not radially degenerate.
       	We therefore assume by contradiction that $\nu_{m+1}\leq 0$ and  denote by $\psi_{m+1}$ the corresponding eigenfunction, which, by Property 5 in Section \ref{sec:2}
	admits $m$ zeros inside the interval $(0,1)$ and then $m+1$ nodal zones. Then we want to prove that the function $z$ introduced in 
          \eqref{funz-ausil-z}
          has at least one zero in any nodal interval of $\psi_{m+1}$. This fact contradicts Lemma \ref{lem:zeri-zeta}, since $z$ has $m$ zeros in $(0,1)$ and concludes the proof. Let $(s_k,s_{k+1})$ be a nodal zone for $\psi_{m+1}$ and suppose by contradiction that $z$ has one sign in this interval. Without loss of generality we can assume $\psi_{m+1}>0$ in $(s_k,s_{k+1})$, which 
also implies $\psi_{m+1}'(s_{k})>0$ and $\psi_{m+1}'(s_{k+1})<0$. If $z$ does not vanishes inside $(s_{k},s_{k+1})$ we may assume without loss of generality that 
$z(r)>0$ in $(s_{k},s_{k+1})$ and $z(s_{k}), z(s_{k+1})\ge 0$.  The arguments in the proof of Lemma  2.4 in \cite{AG-part1}  yield
\begin{equation}\label{genoveffa} \left(r^{M-1}\left(\psi_{m+1}'z- \psi_{m+1}z'\right) \right)' = - \nu_{m+1} r^{M-1} \psi_{m+1} z ,
\end{equation}
	and integrating on $(s_{k},s_{k+1})$  gives 
\[ s_{k+1}^{M-1} \psi_{m+1}'(s_{k+1})z(s_{k+1})- s_{k}^{M-1}  \psi_{m+1}'(s_{k})z(s_{k}) = -\nu_{m+1}\int_{s_{k}}^{s_{k+1}} r^{M-1}  \psi_{m+1} z \ dr. \]
Observe that the the r.h.s. is strictly positive if $\nu_{m+1}<0$ and equal to zero if $\nu_{m+1}=0$, while the l.h.s. is less or equal than zero by the assumptions on $z$ and $\psi_{m+1}$. The only possibility is that $\nu_{m+1}=0$ and $z(s_k)=z(s_{k+1})=0$.   So \eqref{genoveffa} implies that $\psi_{m+1}$ and $z$ are multiples and it is not possible since $\psi_{m+1}(1)=0\neq z(1)$. \end{proof}

\begin{remark}\label{alpha=0}
		Inspecting all the arguments used in this subsection one can easily see that they apply also to the case $\a=0$, i.e. to the Lane-Emden problem. In that particular case the transformation \eqref{transformation-henon-no-c} is the identity, and the presented proof of Theorem \eqref{prop:ultimo-H} is an alternative proof of \cite[Proposition 2.9]{HRS}.
		\end{remark}

{ We end this section recalling that when we are in a variational setting, namely when $1<p<\frac{N+2}{N-2}$, solutions to \eqref{general-f-H-5} (radial and nonradial) can be found minimizing
 the functional
\[\mathcal E(u):=\int_B \left(|\nabla u|^2-|x|^\a|u|^{p+1} \right)dx\]
(which is defined in $H^1_0(B)$)  under suitable constraints. In particular minimizing it on the Nehari manifold produces a least energy solution which is positive and not radial 
 when $\a$ is sufficiently large (depending on $p$) by the result in \cite{SSW}. 
 	Next following \cite{Bartsch-Weth} one can minimize $\mathcal E(u)$ on the nodal Nehari manifold to produce a nodal least energy solution which has two nodal domains and Morse index 2, and considerations based on the Morse index imply that such solution is not radial for $\a=0$, see \cite{AP} and \cite{BDG}.
Estimate \ref{general-morse-estimate-H} then extends this matter also to the case $\a>0$, proving Corollary \ref{cor-ultimo}. \\
Besides, if $\mathcal G$ is any subgroup of $O(N)$, for $1<p<\frac{N+2}{N-2}$, the minimization technique on the nodal Nehari set can be performed also in $H^1_{0,\mathcal G}$, ending with a nodal solution $u$ which belongs to $H^1_{0,\mathcal G}$ and has $m^{\mathcal G}(u)=2$. 
In that way Corollary \ref{cor:morse-estimate-G} ensures that the minimal energy nodal {\it and $\mathcal G$-symmetric} solution is not radial whenever $N_1^{\mathcal G}\neq 0$, for every $\a\ge 0$. As $\a$ increases, the condition under which the minimal energy nodal {\it and $\mathcal G$-symmetric} solution can be radial become more stringent, and it is expected that the multiplicity of nonradial solutions  increases.
This considerations are exploited in \cite{GI}, dealing with the Lane Emden problem in the disk, and in \cite{AG18-2}, \cite{A}, dealing with and the H\'enon problem.

	}

\end{document}